\DeclareMathOperator{\stab}{Stab}
\DeclareMathOperator{\z}{{\mathbb Z}}
\DeclareMathOperator{\g}{\Gamma}
\DeclareMathOperator{\im}{im}
\DeclareMathOperator{\rank}{rank}
\DeclareMathOperator{\f}{\mathcal F}
\DeclareMathOperator{\tf}{\widetilde {\mathcal F}}
\DeclareMathOperator{\vc}{\mathcal V\mathcal C}
\DeclareMathOperator{\fin}{\mathcal F\mathcal I\mathcal N}
\theoremstyle{plain}
\newtheorem{theorem}{Theorem}[section]
\newtheorem{corollary}[theorem]{Corollary}
\newtheorem{proposition}[theorem]{Proposition}
\theoremstyle{definition}
\begin{document}

\title[Lower algebraic K-theory of hyperbolic 3-simplex reflection groups.]
      {Lower algebraic K-theory of hyperbolic 3-simplex reflection groups.}
\author{Jean-Fran\c{c}ois\ Lafont}
\address{Department of Mathematics\\
         Ohio State University\\
         Columbus, OH  43210}
\email[Jean-Fran\c{c}ois\ Lafont]{jlafont@math.ohio-state.edu}
\author{Ivonne J.\ Ortiz}
\address{Department of Mathematics and Statistics\\
         Miami University\\
         Oxford, OH 45056}
\email[Ivonne J.\ Ortiz]{ortizi@muohio.edu}

\begin{abstract}
A hyperbolic 3-simplex reflection group is a Coxeter group arising
as a lattice in $O^+(3,1)$, with fundamental domain a geodesic
simplex in $\mathbb H^3$ (possibly with some ideal vertices).  The
classification of these groups is known, and there are exactly 9
cocompact examples, and 23 non-cocompact examples.  We provide a
complete computation of the lower algebraic K-theory of the integral
group ring of all the hyperbolic 3-simplex reflection groups.
\end{abstract}

\maketitle

\section[Introduction]{Introduction}

In this paper, we proceed to give a complete computation of the
lower algebraic $K$-theory of the integral group ring of all the
hyperbolic 3-simplex reflection groups.

We now proceed to outline the main steps of our approach.  Since the
groups $\g$ we are considering are lattices inside $O^+(3,1)$,
fundamental results of Farrell and Jones \cite{FJ93} imply that the
lower algebraic $K$-theory of the integral group ring $\mathbb Z \g$
can be computed by calculating $H_n^{\g}(E_{\vc}(\g); \mathbb
K\mathbb Z^{-\infty})$, a specific generalized equivariant homology
theory for a model for the classifying space $E_{\vc}(\g)$ of $\g$
with isotropy in the family $\mathcal {\vc}$ of virtually cyclic
subgroups of $\g$.

After introducing the groups we are interested in (see Section 2),
we then combine results from our previous paper \cite{LO} with a
recent construction of L\"uck and Weiermann \cite{LW} to obtain the
following explicit formula for the homology group above:
$$K_n(\z \Gamma)\cong H_n^{\Gamma}(E_{\fin}(\Gamma);\mathbb K\mathbb Z^{-\infty})
\oplus \bigoplus_{i=1}^k H_n^{V_i}(E_{\fin}(V_i)\rightarrow
*).$$
In the formula above, $E_{\fin}(\g)$ is a model for the classifying
space for proper actions, the collection $\{V_i\}_{i=1}^k$ are a
finite collection of virtually cyclic subgroups with specific
geometric properties, and $H_n^{V_i}(E_{\fin}(V_i)\rightarrow
*)$ are cokernels of certain relative assembly maps.  This explicit
formula is obtained in Section 3.

In view of this explicit formula, our computation reduces to being
able to:
\begin{enumerate}
\item identify for each of our groups the corresponding collection
$\{V_i\}$ of virtually cyclic subgroups (done in Section 4),
\item be able to calculate the cokernels of the corresponding
relative assembly maps (done in Section 6), and
\item calculate the homology groups
$H_n^{\Gamma}(E_{\fin}(\Gamma);\mathbb K\mathbb Z^{-\infty})$.
\end{enumerate}
For the computation of the homology groups, we note that Quinn
\cite{Qu82} has developed a spectral sequence for computing the
groups $H^{\g}_n(E_{\fin}(\g); \mathbb K\mathbb Z^{-\infty})$.  The
$E^2$-terms in the spectral sequence can be computed in terms of the
lower algebraic $K$-theory of the stabilizers of cells in a
$CW$-model for the classifying space $E_{\fin}(\g)$.

In Section 5, we proceed to give, for each of the finite subgroups
appearing as a cell stabilizer, a computation of the lower algebraic
$K$-theory.  We return to the spectral sequence computation in
Section 7, where we analyze some of the maps appearing in the
computation of the $E^2$-terms for the Quinn spectral sequence. In
all 32 cases, the spectral sequence collapses at the $E^2$ stage,
allowing us to complete the computations.  The reader who is merely
interested in knowing the results of the computations is invited to
consult Table 6 (for the uniform lattices) and Table 7 (for the
non-uniform lattices). Finally, in the Appendix, we provide a ``walk
through'' of the computations for two of the 32 groups we consider.

\vskip 20pt

\centerline{\bf Acknowledgments}

The authors would like to thank Tom Farrell, Ian Leary, and Marco
Varisco for many helpful comments on this project.  The graphics in
this paper were kindly produced by Dennis Burke.  The authors are
particularly grateful to Bruce Magurn for his extensive help with
the computations of the algebraic $K$-theory of finite groups
appearing in Section 5 of this paper.

The first author's work on this project was partially supported by NSF grant
DMS - 0606002.

\vskip 20pt

\section{The Three-dimensional groups}

A hyperbolic Coxeter $n$-simplex $\Delta^n$ is an $n$-dimensional
geodesic simplex in $\mathbb H^n$, all of  whose dihedral angles are
submultiples of $\pi$ or zero.  We allow  a simplex in $\mathbb H^n$
to be unbounded with ideal vertices on the sphere at infinity of
$\mathbb H^n$.  It is known that such simplices exist only in
dimensions  $n=2,3,\dots,9$, and that for $n \geq3$, there are
exactly 72 hyperbolic Coxeter simplices up to congruence, see
\cite{JKRT99} and \cite{JKRT02}.

A hyperbolic Coxeter $n$-simplex reflection group $\g$ is the group
generated by reflections in the sides of a Coxeter $n$-simplex in
hyperbolic $n$-space $\mathbb H^n$.  We will call such group a {\it
hyperbolic $n$-simplex group}.

According to Vinberg \cite{V67}, the associated hyperbolic
$n$-simplex groups of all but eight of the  72 simplices are
arithmetic. The nonarithmetic groups are the hyperbolic Coxeter
tetrahedra  groups $[(3,4,3,5)]$ $[5,3,6]$. $[5, 3^{[3]}]$,
$[(3^3,6)]$, $[(3,4,3,6)]$, $[(3,5,3,6)]$, and the 5-dimensional
hyperbolic Coxeter group $[(3^5,4)]$.

In dimension three, there are 32 hyperbolic Coxeter tetrahedra
groups. Nine of them are cocompact, see Figure 1, and 23 are
noncocompact, see Figure 2.  Let us briefly recall how the algebra
and geometry of these groups are encoded in the Coxeter diagrams.

From the algebraic viewpoint, the Coxeter diagram encodes a
presentation of the associated group $\Gamma$ as follows: associate
a generator $x_i$ to each vertex $v_i$ of the Coxeter diagram (hence
all of our groups will come equipped with four generators, as the
Coxeter diagrams have four vertices). For the relations in $\Gamma$,
one has:
\begin{enumerate}
\item for every vertex $v_i$, one inserts the relation $x_i^2=1$
\item if two vertices $v_i,v_j$ are not joined by an edge, one
inserts the relation $(x_ix_j)^2=1$ (so combined with the previous
relation, one sees that $x_i$ and $x_j$ commute, generating a $\z
/2\times \z /2$),
\item if two vertices $v_i,v_j$ are joined by an unlabelled edge,
one inserts the relation $(x_ix_j)^3=1$ (and in particular, the two
elements $x_i,x_j$ generate a subgroup isomorphic to the dihedral
group $D_3$),
\item if two vertices $v_i,v_j$ are joined by an edge with label $m_{ij}$,
one inserts the relation $(x_ix_j)^m_{ij}=1$ (and hence, the two
elements $x_i,x_j$ generate a subgroup isomorphic to the dihedral
group $D_{m_{ij}}$).
\end{enumerate}
A {\it special subgroup} of $\Gamma$ will be a subgroup generated by
a subset of the generating set.  Observe that such a subgroup will
automatically be a Coxeter group, with a presentation that can again
be read off from the Coxeter diagram.  Special subgroups generated
by a pair of generators will always be isomorphic to a (finite)
dihedral group.  An important point for our purposes is that in our
Coxeter groups, {\it every} finite subgroup can be conjugated into a
finite special subgroup.  In particular, since there are only
finitely many special subgroups, one can quite easily classify up to
isomorphism all the finite subgroups appearing in any of our 32
Coxeter groups.

Now let us move to the geometric viewpoint.  As we mentioned
earlier, associated to any of our 32 Coxeter groups, one has a
simplex $\Delta ^3$ in hyperbolic 3-space $\mathbb H^3$.  Each of
the four generators $x_i$ of the Coxeter group $\Gamma$ is
bijectively associated with the hyperplane $P_i$ extending one of
the four faces of the simplex $\Delta ^3$, and the angles between
the respective hyperplanes can again be read off from the Coxeter
diagram:
\begin{enumerate}
\item if two vertices $v_i,v_j$ are not joined by an edge, then
$\angle (P_i,P_j)=\pi/2$
\item if two vertices $v_i,v_j$ are joined by an unlabelled edge,
then $\angle (P_i,P_j) = \pi/3$
\item if two vertices $v_i,v_j$ are joined by an edge with label $m_{ij}$,
then $\angle (P_i,P_j) = \pi/ {m_{ij}}$.
\end{enumerate}
The resulting configuration of four hyperplanes exists, and is
unique up to isometries of $H^3$.  One can now define the map
$\Gamma \rightarrow O^+(3,1)=Isom(\mathbb H^3)$ by sending each
generator $x_i$ to the isometry obtained by reflecting in the
corresponding hyperplane $P_i$.  The condition on the angles between
the hyperplanes ensures that this map respects the relations in
$\Gamma$, and hence is actually a homomorphism.  In fact this map is
an embedding of $\Gamma$ as a discrete subgroup of $O^+(3,1)$, with
fundamental domain for the associated action on $\mathbb H^3$
consisting precisely of the simplex $\Delta ^3$.

Finally, to relate the geometric with the algebraic viewpoint, we
remind the reader of the following bijective identifications:
\begin{enumerate}
\item given an edge in the 3-simplex $\Delta ^3$, lying on the
intersection of two hyperplanes $P_i, P_j$, the subgroup of $\Gamma$
that fixes the edge pointwise is precisely the special subgroup
$\langle x_i, x_j\rangle$ (and hence will be a dihedral group).
\item given a vertex in the 3-simplex $\Delta ^3$, obtained as the
intersection of three hyperplanes $P_i, P_j, P_k$, the subgroup of
$\Gamma$ that stabilizes the vertex is precisely the special
subgroup $\langle x_i,x_j,x_k\rangle$.
\end{enumerate}
We point out that the stabilizer of a vertex of $\Delta ^3$ will
either be a finite Coxeter group (if the vertex lies inside $\mathbb
H^3$), or will be a 2-dimensional crystallographic group (if the
vertex is an ideal vertex).  Furthermore, one can readily determine
whether a vertex will be ideal or not, just by determining whether
the associated special subgroup is crystallographic or finite.

It is known that for all the groups listed above the Farrell and
Jones Isomorphism Conjecture in lower algebraic $K$-theory holds,
that is $\ H^{\g}_n(E_{\vc}(\g);\mathbb K\mathbb Z^{-\infty}) \cong
K_n(\mathbb Z\g)$ for $n<2$ (see \cite[Theorem 2.1]{Or04}). Our plan
is to use this result to explicitly compute the lower algebraic
$K$-theory of the integral group ring $\mathbb Z\g$, for all of the
32 groups listed above.


\hfill
\begin{figure}[h]
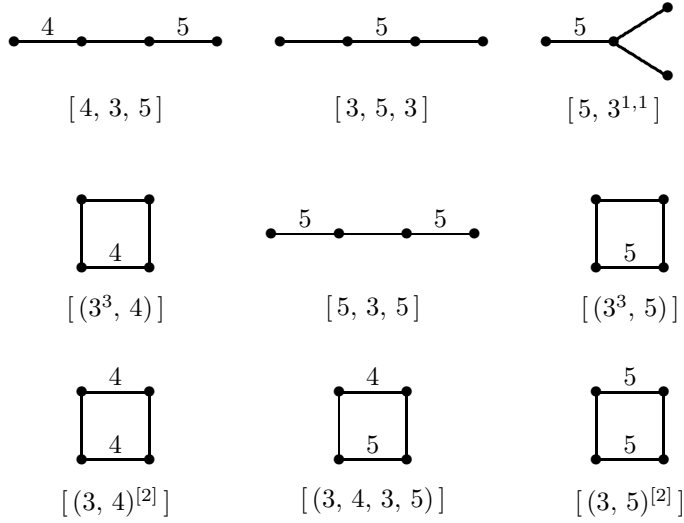

\hbox{
\vbox{\beginpicture
\setcoordinatesystem units <.9cm,.9cm> point at -.4 1
\setplotarea x from -.4 to 3.4, y from -1.4 to 1.4
\linethickness=.7pt
\putrule from 0 0  to 3 0
\put {$\bullet$} at  0  0
\put {$\bullet$} at  1 0
\put {$\bullet$} at  2  0
\put {$\bullet$} at 3  0
\put {$4$} [b] at   .5 .1
\put {$5$} [b] at   2.5 .1
\put {$[\,4,\,3,\,5\,]$} [t] at  1.5 -.8
\endpicture}

\vbox{\beginpicture
\setcoordinatesystem units <.9cm,.9cm> point at -.4 1
\setplotarea x from -.4 to 3.4, y from -1.4 to 1.4
\linethickness=.7pt
\putrule from 0 0  to 3 0
\put {$\bullet$} at  0  0
\put {$\bullet$} at  1 0
\put {$\bullet$} at  2  0
\put {$\bullet$} at 3  0
\put {$5$} [b] at   1.5 .1
\put {$[\,3,\,5,\,3\,]$} [t] at  1.5 -.8
\endpicture}

\vbox{\beginpicture
\setcoordinatesystem units <.9cm,.9cm> point at -.4 1
\setplotarea x from -.4 to 3.4, y from -1.4 to 1.4
\linethickness=.7pt
\putrule from 0 0 to 1 0
\setplotsymbol ({\circle*{.6}})
\plotsymbolspacing=.3pt        
\plot 1 0  1.8 .5 /
\plot 1 0 1.8 -.5 /
\put {$\bullet$} at  0 0
\put {$\bullet$} at  1 0
\put {$\bullet$} at 1.8 .5
\put {$\bullet$} at 1.8 -.5
\put {$5$} [b] at   .5 .1
\put {$[\,5,\,3^{1,1}\,]$} [t] at  1 -.8
\endpicture}
}

\hbox{
\hfil\vbox{\beginpicture
\setcoordinatesystem units <.9cm,.9cm> point at -.4 1
\setplotarea x from -.4 to 3.4, y from -1.4 to 1.4
\linethickness=.7pt
\putrule from 1 .5  to 2 .5
\putrule from 1 -.5  to 2 -.5
\putrule from 1 .5  to 1 -.5
\putrule from 2 .5  to 2 -.5
\put {$\bullet$} at  1  .5
\put {$\bullet$} at  1 -.5
\put {$\bullet$} at  2  .5
\put {$\bullet$} at  2  -.5
\put {$4$} [b] at   1.5 -.4
\put {$[\,(3^3,\,4)\,]$} [t] at  1.5 -.9
\endpicture}\hfil

\hfil\vbox{\beginpicture
\setcoordinatesystem units <.9cm,.9cm> point at -.4 1
\setplotarea x from -.4 to 3.4, y from -1.4 to 1.4
\linethickness=.7pt
\putrule from 0 0  to 3 0
\put {$\bullet$} at  0  0
\put {$\bullet$} at  1 0
\put {$\bullet$} at  2  0
\put {$\bullet$} at 3  0
\put {$5$} [b] at   .5 .1
\put {$5$} [b] at   2.5 .1
\put {$[\,5,\,3,\,5\,]$} [t] at  1.5 -.9
\endpicture}\hfil

\hfil\vbox{\beginpicture
\setcoordinatesystem units <.9cm,.9cm> point at -.4 1
\setplotarea x from -.4 to 3.4, y from -1.4 to 1.4
\linethickness=.7pt
\putrule from 1 .5  to 2 .5
\putrule from 1 -.5  to 2 -.5
\putrule from 1 .5  to 1 -.5
\putrule from 2 .5  to 2 -.5
\put {$\bullet$} at  1  .5
\put {$\bullet$} at  1 -.5
\put {$\bullet$} at  2  .5
\put {$\bullet$} at  2  -.5
\put {$5$} [b] at   1.5 -.4
\put {$[\,(3^3,\,5)\,]$} [t] at  1.5 -.9
\endpicture}\hfil
}

\hbox{
\hfil\vbox{\beginpicture
\setcoordinatesystem units <.9cm,.9cm> point at -.4 1
\setplotarea x from -.4 to 3.4, y from -1.4 to 1.4
\linethickness=.7pt
\putrule from 1 .5  to 2 .5
\putrule from 1 -.5  to 2 -.5
\putrule from 1 .5  to 1 -.5
\putrule from 2 .5  to 2 -.5
\put {$\bullet$} at  1  .5
\put {$\bullet$} at  1 -.5
\put {$\bullet$} at  2  .5
\put {$\bullet$} at  2  -.5
\put {$4$} [b] at 1.5 -.4
\put {$4$} [b] at 1.5  .6
\put {$[\,(3,\,4)^{[2]}\,]$} [t] at  1.5 -.9
\endpicture}\hfil

\hfil\vbox{\beginpicture
\setcoordinatesystem units <.9cm,.9cm> point at -.4 1
\setplotarea x from -.4 to 3.4, y from -1.4 to 1.4
\linethickness=.7pt
\putrule from 1 .5  to 2 .5
\putrule from 1 -.5  to 2 -.5
\putrule from 1 .5  to 1 -.5
\putrule from 2 .5  to 2 -.5
\put {$\bullet$} at  1  .5
\put {$\bullet$} at  1 -.5
\put {$\bullet$} at  2  .5
\put {$\bullet$} at  2  -.5
\put {$5$} [b] at   1.5 -.4
\put {$4$} [b] at 1.5  .6
\put {$[\,(3,\,4,\,3,\,5)\,]$} [t] at  1.5 -.9
\endpicture}\hfil

\hfil\vbox{\beginpicture
\setcoordinatesystem units <.9cm,.9cm> point at -.4 1
\setplotarea x from -.4 to 3.4, y from -1.4 to 1.4
\linethickness=.7pt
\putrule from 1 .5  to 2 .5
\putrule from 1 -.5  to 2 -.5
\putrule from 1 .5  to 1 -.5
\putrule from 2 .5  to 2 -.5
\put {$\bullet$} at  1  .5
\put {$\bullet$} at  1 -.5
\put {$\bullet$} at  2  .5
\put {$\bullet$} at  2  -.5
\put {$5$} [b] at   1.5 -.4
\put {$5$} [b] at 1.5  .6
\put {$[\,(3,\,5)^{[2]}\,]$} [t] at  1.5 -.9
\endpicture}\hfil
}
\caption{Cocompact hyperbolic Coxeter tetrahedral  groups}
\end{figure}
\hfil


\begin{figure}
\hbox{        
\hfil\vbox{\beginpicture
\setcoordinatesystem units <.75cm,.75cm> point at -.5 1
\setplotarea x from -.5 to 3.5, y from -1.4 to 1.4
\linethickness=.7pt
\putrule from 1 .5  to 2 .5
\putrule from 1 -.5  to 2 -.5
\putrule from 1 .5  to 1 -.5
\putrule from 2 .5  to 2 -.5
\put {$\bullet$} at  1  .5
\put {$\bullet$} at  1 -.5
\put {$\bullet$} at  2  .5
\put {$\bullet$} at  2  -.5
\put {$6$} [b] at   1.5 -.4
\put {$[\,(3^3,\,6)\,]$} [t] at  1.5 -.9
\endpicture}\hfil

\hfil\vbox{\beginpicture
\setcoordinatesystem units <.75cm,.75cm> point at -.5 1
\setplotarea x from -.5 to 3.5, y from -1.4 to 1.4
\linethickness=.7pt
\putrule from 1 .5  to 2 .5
\putrule from 1 -.5  to 2 -.5
\putrule from 1 .5  to 1 -.5
\putrule from 2 .5  to 2 -.5
\put {$\bullet$} at  1  .5
\put {$\bullet$} at  1 -.5
\put {$\bullet$} at  2  .5
\put {$\bullet$} at  2  -.5
\put {$4$} [b] at 1.5  .6
\put {$6$} [b] at 1.5 -.4
\put {$[\,(3,\,4,\,3,\,6)\,]$} [t] at  1.5 -.9
\endpicture}\hfil

\hfil\vbox{\beginpicture
\setcoordinatesystem units <.75cm,.75cm> point at -.5 1
\setplotarea x from -.5 to 3.5, y from -1.4 to 1.4
\linethickness=.7pt
\putrule from 1 .5  to 2 .5
\putrule from 1 -.5  to 2 -.5
\putrule from 1 .5  to 1 -.5
\putrule from 2 .5  to 2 -.5
\put {$\bullet$} at  1  .5
\put {$\bullet$} at  1 -.5
\put {$\bullet$} at  2  .5
\put {$\bullet$} at  2  -.5
\put {$5$} [b] at 1.5  .6
\put {$6$} [b] at   1.5 -.4
\put {$[\,(3,\,5,\,3,\,6)\,]$} [t] at  1.5 -.9
\endpicture}\hfil

\hfil\vbox{\beginpicture
\setcoordinatesystem units <.75cm,.75cm> point at -.5 1
\setplotarea x from -.5 to 3.5, y from -1.4 to 1.4
\linethickness=.7pt
\putrule from 1 .5  to 2 .5
\putrule from 1 -.5  to 2 -.5
\putrule from 1 .5  to 1 -.5
\putrule from 2 .5  to 2 -.5
\put {$\bullet$} at  1  .5
\put {$\bullet$} at  1 -.5
\put {$\bullet$} at  2  .5
\put {$\bullet$} at  2  -.5
\put {$6$} [b] at 1.5  .6
\put {$6$} [b] at   1.5 -.4
\put {$[\,(3,\,6)^{[2]}\,]$} [t] at  1.5 -.9
\endpicture}\hfil
}


\hbox{                 
\vbox{\beginpicture
\setcoordinatesystem units <.75cm,.75cm> point at -.5 1
\setplotarea x from -.5 to 3.5, y from -1.4 to 1.4
\linethickness=.7pt
\putrule from 0 0  to 3 0
\put {$\bullet$} at  0  0
\put {$\bullet$} at  1 0
\put {$\bullet$} at  2  0
\put {$\bullet$} at 3  0
\put {$5$} [b] at   .5 .1
\put {$6$} [b] at   2.5 .1
\put {$[\,5,\,3,\,6\,]$} [t] at  1.5 -.8
\endpicture}

\vbox{\beginpicture
\setcoordinatesystem units <.75cm,.75cm> point at -.5 1
\setplotarea x from -.5 to 3.5, y from -1.4 to 1.4
\linethickness=.7pt
\putrule from 0 0  to 3 0
\put {$\bullet$} at  0  0
\put {$\bullet$} at  1 0
\put {$\bullet$} at  2  0
\put {$\bullet$} at 3  0
\put {$6$} [b] at   .5 .1
\put {$6$} [b] at   2.5 .1
\put {$[\,6,\,3,\,6\,]$} [t] at  1.5 -.8
\endpicture}

\vbox{\beginpicture
\setcoordinatesystem units <.75cm,.75cm> point at -.5 1
\setplotarea x from -.5 to 3.5, y from -1.4 to 1.4
\linethickness=.7pt
\putrule from 0 0  to 3 0
\put {$\bullet$} at  0  0
\put {$\bullet$} at  1 0
\put {$\bullet$} at  2  0
\put {$\bullet$} at 3  0
\put {$6$} [b] at   2.5 .1
\put {$[\,3,\,3,\,6\,]$} [t] at  1.5 -.8
\endpicture}

\vbox{\beginpicture
\setcoordinatesystem units <.75cm,.75cm> point at -.5 1
\setplotarea x from -.5 to 3.5, y from -1.4 to 1.4
\linethickness=.7pt
\putrule from 0 0  to 3 0
\put {$\bullet$} at  0  0
\put {$\bullet$} at  1 0
\put {$\bullet$} at  2  0
\put {$\bullet$} at 3  0
\put {$4$} [b] at   .5 .1
\put {$6$} [b] at   2.5 .1
\put {$[\,4,\,3,\,6\,]$} [t] at  1.5 -.8
\endpicture}
}


\hbox{                     
\vbox{\beginpicture
\setcoordinatesystem units <.75cm,.75cm> point at -.5 1
\setplotarea x from -.5 to 3.5, y from -1.4 to 1.4
\linethickness=.7pt
\putrule from 0 0 to 1 0
\setplotsymbol ({\circle*{.5}})
\plotsymbolspacing=.3pt        
\plot 1 0  1.8 .5  1.8 -.5 /
\plot 1 0 1.8 -.5 /
\put {$\bullet$} at  0 0
\put {$\bullet$} at  1 0
\put {$\bullet$} at 1.8 .5
\put {$\bullet$} at 1.8 -.5
\put {$[\,3,\,3^{[3]}\,]$} [t] at  1 -.8
\endpicture}

\hfil\vbox{\beginpicture
\setcoordinatesystem units <.75cm,.75cm> point at -.5 1
\setplotarea x from -.5 to 3.5, y from -1.4 to 1.4
\linethickness=.7pt
\putrule from 0 0  to 3 0
\put {$\bullet$} at  0  0
\put {$\bullet$} at  1 0
\put {$\bullet$} at  2  0
\put {$\bullet$} at 3  0
\put {$6$} [b] at   1.5 .1
\put {$[\,3,\,6,\,3\,]$} [t] at  1.5 -.9
\endpicture}\hfil

\vbox{\beginpicture
\setcoordinatesystem units <.75cm,.75cm> point at -.5 1
\setplotarea x from -.5 to 3.5, y from -1.4 to 1.4
\linethickness=.7pt
\putrule from 0 0 to 1 0
\setplotsymbol ({\circle*{.5}})
\plotsymbolspacing=.3pt        
\plot 1 0  1.8 .5 /
\plot 1 0 1.8 -.5 /
\put {$\bullet$} at  0 0
\put {$\bullet$} at  1 0
\put {$\bullet$} at 1.8 .5
\put {$\bullet$} at 1.8 -.5
\put {$6$} [b] at   .5 .1
\put {$[\,6,\,3^{1,1}\,]$} [t] at  1 -.8
\endpicture}

\vbox{\beginpicture
\setcoordinatesystem units <.75cm,.75cm> point at -.5 1
\setplotarea x from -.5 to 3.5, y from -1.4 to 1.4
\linethickness=.7pt
\putrule from 0 0 to 1 0
\setplotsymbol ({\circle*{.5}})
\plotsymbolspacing=.3pt        
\plot 1 0  1.8 .5 1.8 -.5 /
\plot 1 0 1.8 -.5 /
\put {$\bullet$} at  0 0
\put {$\bullet$} at  1 0
\put {$\bullet$} at 1.8 .5
\put {$\bullet$} at 1.8 -.5
\put {$4$} [b] at   .5 .1
\put {$[\,4,\,3^{[3]}\,]$} [t] at  1 -.8
\endpicture}
}


\hbox{                      
\vbox{\beginpicture
\setcoordinatesystem units <.75cm,.75cm> point at -.5 1
\setplotarea x from -.5 to 3.5, y from -1.4 to 1.4
\linethickness=.7pt
\putrule from 0 0 to 1 0
\setplotsymbol ({\circle*{.5}})
\plotsymbolspacing=.3pt        
\plot 1 0  1.8 .5 1.8 -.5 /
\plot 1 0 1.8 -.5 /
\put {$\bullet$} at  0 0
\put {$\bullet$} at  1 0
\put {$\bullet$} at 1.8 .5
\put {$\bullet$} at 1.8 -.5
\put {$5$} [b] at   .5 .1
\put {$[\,5,\,3^{[3]}\,]$} [t] at  1 -.8
\endpicture}

\vbox{\beginpicture
\setcoordinatesystem units <.75cm,.75cm> point at -.5 1
\setplotarea x from -.5 to 3.5, y from -1.4 to 1.4
\linethickness=.7pt
\putrule from 0 0 to 1 0
\setplotsymbol ({\circle*{.5}})
\plotsymbolspacing=.3pt        
\plot 1 0  1.8 .5 1.8 -.5 /
\plot 1 0 1.8 -.5 /
\put {$\bullet$} at  0 0
\put {$\bullet$} at  1 0
\put {$\bullet$} at 1.8 .5
\put {$\bullet$} at 1.8 -.5
\put {$6$} [b] at   .5 .1
\put {$[\,6,\,3^{[3]}\,]$} [t] at  1 -.8
\endpicture}

\hfil\vbox{\beginpicture
\setcoordinatesystem units <.75cm,.75cm> point at -.5 1
\setplotarea x from -.5 to 3.5, y from -1.4 to 1.4
\linethickness=.7pt
\putrule from 1 .5  to 2 .5
\putrule from 1 -.5  to 2 -.5
\putrule from 1 .5  to 1 -.5
\putrule from 2 .5  to 2 -.5
\put {$\bullet$} at  1  .5
\put {$\bullet$} at  1 -.5
\put {$\bullet$} at  2  .5
\put {$\bullet$} at  2  -.5
\put {$4$} [b] at   1.5 -.4
\put {$4$} [l] at   2.1 0
\put {$[\,(3^2,\,4^2)\,]$} [t] at  1.5 -.9
\endpicture}\hfil

\hfil\vbox{\beginpicture
\setcoordinatesystem units <.75cm,.75cm> point at -.5 1
\setplotarea x from -.5 to 3.5, y from -1.4 to 1.4
\linethickness=.7pt
\putrule from 1 .5  to 2 .5
\putrule from 1 -.5  to 2 -.5
\putrule from 1 .5  to 1 -.5
\putrule from 2 .5  to 2 -.5
\put {$\bullet$} at  1  .5
\put {$\bullet$} at  1 -.5
\put {$\bullet$} at  2  .5
\put {$\bullet$} at  2  -.5
\put {$4$} [b] at   1.5 -.4
\put {$4$} [r] at   .9  0
\put {$4$} [l] at   2.1 0
\put {$[\,(3,\,4^3)\,]$} [t] at  1.5 -.9
\endpicture}\hfil
}


\hbox{                      
\hfil\vbox{\beginpicture
\setcoordinatesystem units <.75cm,.75cm> point at -.5 1
\setplotarea x from -.5 to 3.5, y from -1.4 to 1.4
\linethickness=.7pt
\putrule from 1 .5  to 2 .5
\putrule from 1 -.5  to 2 -.5
\putrule from 1 .5  to 1 -.5
\putrule from 2 .5  to 2 -.5
\put {$\bullet$} at  1  .5
\put {$\bullet$} at  1 -.5
\put {$\bullet$} at  2  .5
\put {$\bullet$} at  2  -.5
\put {$4$} [b] at   1.5 -.4
\put {$4$} [b] at   1.5 .6
\put {$4$} [r] at   .9  0
\put {$4$} [l] at   2.1 0
\put {$[\,4^{[4]}\,]$} [t] at  1.5 -.9
\endpicture}\hfil

\vbox{\beginpicture
\setcoordinatesystem units <.75cm,.75cm> point at -.5 1
\setplotarea x from -.5 to 3.5, y from -1.4 to 1.4
\linethickness=.7pt
\putrule from 0 0 to 1 0
\setplotsymbol ({\circle*{.5}})
\plotsymbolspacing=.3pt        
\plot 1 0  1.8 .5 /
\plot 1 0 1.8 -.5 /
\put {$\bullet$} at  0 0
\put {$\bullet$} at  1 0
\put {$\bullet$} at 1.8 .5
\put {$\bullet$} at 1.8 -.5
\put {$4$} [b] at   1.4 .4
\put {$4$} [t] at   1.4 -.4
\put {$[\,3,\,4^{1,1}\,]$} [t] at  1 -.8
\endpicture}

\vbox{\beginpicture
\setcoordinatesystem units <.75cm,.75cm> point at -.5 1
\setplotarea x from -.5 to 3.5, y from -1.4 to 1.4
\linethickness=.7pt
\putrule from 0 0  to 3 0
\put {$\bullet$} at  0  0
\put {$\bullet$} at  1 0
\put {$\bullet$} at  2  0
\put {$\bullet$} at 3  0
\put {$4$} [b] at   1.5 .1
\put {$4$} [b] at   2.5 .1
\put {$[\,3,\,4,\,4\,]$} [t] at  1.5 -.8
\endpicture}

\vbox{\beginpicture
\setcoordinatesystem units <.75cm,.75cm> point at -.5 1
\setplotarea x from -.5 to 3.5, y from -1.4 to 1.4
\linethickness=.7pt
\putrule from 0 0  to 3 0
\put {$\bullet$} at  0  0
\put {$\bullet$} at  1 0
\put {$\bullet$} at  2  0
\put {$\bullet$} at 3  0
\put {$4$} [b] at   .5 .1
\put {$4$} [b] at   1.5 .1
\put {$4$} [b] at   2.5 .1
\put {$[\,4,\,4,\,4\,]$} [t] at  1.5 -.8
\endpicture}
}

\hbox{                       
\vbox{\beginpicture
\setcoordinatesystem units <.75cm,.75cm> point at -.5 1
\setplotarea x from -.5 to 3.5, y from -1.4 to 1.4
\linethickness=.7pt
\putrule from 0 0 to 1 0
\setplotsymbol ({\circle*{.5}})
\plotsymbolspacing=.3pt        
\plot 1 0  1.8 .5 /
\plot 1 0 1.8 -.5 /
\put {$\bullet$} at  0 0
\put {$\bullet$} at  1 0
\put {$\bullet$} at 1.8 .5
\put {$\bullet$} at 1.8 -.5
\put {$4$} [b] at  .5 .1
\put {$4$} [b] at   1.4 .4
\put {$4$} [t] at   1.4 -.4
\put {$[\,4^{1,1,1}\,]$} [t] at  1 -1
\endpicture}

\vbox{\beginpicture
\setcoordinatesystem units <.75cm,.75cm> point at -.5 1
\setplotarea x from -.5 to 3.5, y from -1.4 to 1.4
\linethickness=.7pt
\setplotsymbol ({\circle*{.5}})
\plotsymbolspacing=.3pt        
\plot 0 0  1 .6  2 0  /
\plot 0 0  1 -.6 2 0 /
\putrule from 1 .6 to 1 -.6
\put {$\bullet$} at  0 0
\put {$\bullet$} at  2 0
\put {$\bullet$} at 1 .6
\put {$\bullet$} at 1 -.6
\put {$[\,3^{[3,3]}\,]$} [t] at  1 -1
\endpicture}

\vbox{\beginpicture
\setcoordinatesystem units <.75cm,.75cm> point at -.5 1
\setplotarea x from -.5 to 3.5, y from -1.4 to 1.4
\linethickness=.7pt
\setplotsymbol ({\circle*{.5}})
\plotsymbolspacing=.3pt        
\plot 0 -.3  1 .8  2 -.3  1 .1  0 -.3 /
\putrule from 1 .8 to 1 .1
\putrule from 0 -.3 to 2 -.3
\put {$\bullet$} at  0 -.3
\put {$\bullet$} at  2 -.3
\put {$\bullet$} at 1 .8
\put {$\bullet$} at 1 .1
\put {$[\,3^{[\ ]\times[\ ]}\,]$} [t] at  1 -1
\endpicture}
\hfill}

\caption{Noncocompact hyperbolic Coxeter tetrahedral groups}
\end{figure}


\section{A formula for the algebraic K-theory.}

In this section, we combine some recent work of L\"uck and
Weiermann \cite{LW} with some previous work of the authors \cite{LO}
to establish the following:

\begin{proposition}
Let $\f \subset \tf$ be a nested pair of families of subgroups of
$\Gamma$, and assume that the collection of subgroups
$\{H_\alpha\}_{\alpha \in I}$ is adapted to the pair $(\f, \tf)$.
Let $\mathcal H$ be a complete set of representatives of the
conjugacy classes within $\{H_\alpha\}$, and consider the cellular
$\Gamma$-pushouts:
$$\xymatrix{\coprod_{H\in \mathcal H} \Gamma \times_H E_{\f}(H)   \ar[d] _\alpha \ar[rrr]^\beta & & &
E_{\f}(\g) \ar[d]\\
\coprod_{H\in \mathcal H} \Gamma \times_H E_{\tf}(H) \ar[rrr] & & & X}
$$
Then $X$ is a model for $E_{\tf}(\Gamma)$.  In the above cellular
$\Gamma$-pushout, we require either (1) $\alpha$ is the disjoint union
of cellular $H$-maps ($H\in \mathcal H$), $\beta$ is an inclusion of
$\Gamma$-CW-complexes, or (2) $\alpha$ is the disjoint union
of inclusions of $H$-CW-complexes ($H\in \mathcal H$), $\beta$ is a
cellular $\Gamma$-map.
\end{proposition}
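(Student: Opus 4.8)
The plan is to verify directly that the $\Gamma$-CW-complex $X$ satisfies the universal fixed-point characterization of a model for $E_{\tf}(\Gamma)$: namely that $X^K$ is contractible when $K\in\tf$ and empty when $K\notin\tf$, over all subgroups $K\le\Gamma$. First I would observe that $X$ really is a $\Gamma$-CW-complex, being the pushout of $\Gamma$-CW-complexes along a cellular map and an inclusion (in either scenario (1) or (2) exactly one leg of the square is a cofibration of $\Gamma$-CW-complexes). Because one leg is a cofibration, applying the fixed-point functor $(-)^K$ sends the square to a pushout of spaces in which that leg is again a cofibration; hence $X^K$ is the homotopy pushout of
\[
\Big(\coprod_{H\in\mathcal H}\Gamma\times_H E_{\tf}(H)\Big)^K \;\longleftarrow\; \Big(\coprod_{H\in\mathcal H}\Gamma\times_H E_{\f}(H)\Big)^K \;\longrightarrow\; E_{\f}(\Gamma)^K .
\]

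The second step is to evaluate these fixed-point sets by the standard double-coset formula: for $H\le\Gamma$, an $H$-CW-complex $Y$, and $K\le\Gamma$,
\[
(\Gamma\times_H Y)^K=\coprod_{\substack{KgH\in K\backslash\Gamma/H\\ g^{-1}Kg\subseteq H}} Y^{\,g^{-1}Kg}.
\]
Taking $Y=E_{\f}(H)$ or $Y=E_{\tf}(H)$ and using that $E_{\mathcal D}(H)^{L}$ is contractible if $L\in\mathcal D$ and empty otherwise, each summand contributes either a point or nothing according to whether the conjugate $g^{-1}Kg$ lies in the relevant family. Since families are closed under conjugation and passage to subgroups, this reduces the whole problem to bookkeeping of which conjugates $g^{-1}Kg$ simultaneously land inside some $H$ and inside $\f$ or $\tf$.

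The third step is the case analysis on $K$. If $K\in\f$, then every conjugate $g^{-1}Kg$ contained in $H$ lies in $\f\cap H\subseteq\tf\cap H$, so the two coproducts on the left are indexed by the same double cosets with each summand a point, and $\alpha^K$ is a homotopy equivalence matched summand by summand; as $E_{\f}(\Gamma)^K\simeq *$, the homotopy pushout collapses to a point. If $K\notin\tf$, then no conjugate of $K$ lies in $\f$ or in $\tf$, so all three fixed-point sets are empty and $X^K=\emptyset$. The decisive case is $K\in\tf\setminus\f$: here $E_{\f}(\Gamma)^K=\emptyset$ and the upper-left coproduct has empty $K$-fixed points, so $X^K$ reduces to $\big(\coprod_{H}\Gamma\times_H E_{\tf}(H)\big)^K$, a disjoint union of points indexed by the double cosets $KgH$ with $g^{-1}Kg\subseteq H$.

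This last case is the main obstacle, and it is precisely where the hypothesis that $\{H_\alpha\}$ is \emph{adapted} to $(\f,\tf)$ must enter: I would extract from the defining properties of an adapted collection the statement that for each $K\in\tf\setminus\f$ there is \emph{exactly one} $H\in\mathcal H$ and exactly one double coset $KgH$ with $g^{-1}Kg\subseteq H$ — existence because every subgroup in $\tf\setminus\f$ is subconjugate into one of the $H_\alpha$, and uniqueness because of the self-normalizing/maximality condition built into "adapted," combined with the passage to conjugacy-class representatives $\mathcal H$. Granting this, $X^K$ is a single point, completing the verification. Finally I would note that the two scenarios (1) and (2) are handled in parallel: in each it is the cofibration leg — $\beta$ in (1), $\alpha$ in (2) — that makes the ordinary pushout compute the homotopy pushout after applying $(-)^K$, and the three cases then go through verbatim. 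The residual work is thus entirely in isolating from the definition of "adapted" the existence-and-uniqueness statement consumed in the decisive case.
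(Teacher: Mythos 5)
Your proposal is correct, but it follows a genuinely different route from the paper. The paper never verifies the fixed-point criterion directly: it uses the adapted collection to define an equivalence relation on $\tf \setminus \f$ (declaring $G_1\sim G_2$ iff both lie in a common $H_\alpha$), checks the two properties such a relation must satisfy, and then invokes the general pushout theorem of L\"uck--Weiermann \cite[Theorem 2.3]{LW}; the actual work in the paper is group-theoretic, namely Claim 1 ($N_\Gamma[G]=H$ for the unique $H\in\{H_\alpha\}_{\alpha\in I}$ containing $G$) and Claim 2 ($\tf[G]=\tf\cap H$), after which the L\"uck--Weiermann square becomes the square in the statement. Your argument instead re-proves by hand, via the double-coset formula for $(\Gamma\times_H Y)^K$, the equivariant-homotopy content that the paper outsources to \cite{LW}, and your three cases ($K\in\f$, $K\notin\tf$, $K\in\tf\setminus\f$) are all handled correctly. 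The existence-and-uniqueness lemma you defer as ``residual work'' is essentially the paper's Claim 1 and does hold, but note that your attribution of uniqueness to a ``self-normalizing/maximality condition'' is incomplete as stated: there is no maximality clause in the definition of adapted, and the step you actually need first is property (1) (pairwise intersections of distinct $H_\alpha$'s lie in $\f$). Concretely, if $g_1^{-1}Kg_1\leq H_1$ and $g_2^{-1}Kg_2\leq H_2$ with $H_1,H_2\in\mathcal H$, then $K\leq g_iH_ig_i^{-1}$ for $i=1,2$, and since $K\in\tf\setminus\f$ and families are closed under passage to subgroups, the intersection $g_1H_1g_1^{-1}\cap g_2H_2g_2^{-1}$ cannot lie in $\f$, so property (1) forces $g_1H_1g_1^{-1}=g_2H_2g_2^{-1}$; only then does the choice of $\mathcal H$ as conjugacy representatives give $H_1=H_2=H$, and self-normalization (property (3)) give $g_2^{-1}g_1\in N_\Gamma(H)=H$, i.e.\ $g_1H=g_2H$, pinning down the single coset and hence the single contractible component of $X^K$. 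With that supplied, your verification is complete. The trade-off between the two approaches: yours is self-contained and makes transparent exactly where each adaptedness axiom is consumed, while the paper's is shorter modulo \cite{LW} and situates the construction in a framework that applies to arbitrary equivalence relations on $\tf\setminus\f$, not just the one induced by an adapted collection.
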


\begin{proof}
Let us start by recalling that a collection $\{H_{\alpha}\}_{\alpha
\in I}$ of subgroups of $\g$ is adapted to the pair $(\f, \tf)$
provided that:
\begin{enumerate}
\item For all $H_1, H_2 \in \{H_{\alpha}\}_{\alpha \in I}$,
either $H_1=H_2$, or $H_1 \cap H_2 \in \f$.
\item The collection $\{H_{\alpha}\}_{\alpha \in I}$ is {\it conjugacy
closed} i.e.\ if $H \in \{H_{\alpha}\}_{\alpha \in I}$ then
$gHg^{-1} \in \{H_{\alpha}\}_{\alpha \in I}$ for all $g \in \g$.
\item Every $H \in \{H_{\alpha}\}_{\alpha \in I}$ is {\it self-normalizing}, i.e.\ $N_{\Gamma}(H)=H$.
\item For all $G \in \tf \setminus \f$, there
exists $H \in \{H_{\alpha}\}_{\alpha \in I}$ such that $G \leq H$.
\end{enumerate}
Note that the subgroups in the collection $\{H_{\alpha}\}_{\alpha
\in I}$ are {\it not} assumed to lie within the family $\tf$.

Using the existence of the adapted family $\{H_{\alpha}\}_{\alpha
\in I}$, one can now define an equivalence relation on the subgroups
in $\tf - \f$ as follows: we decree that $G_1\sim G_2$ if there
exists an $H \in \{H_{\alpha}\}_{\alpha \in I}$ such that $G_1\leq
H$ and $G_2\leq H$.  Note that $\sim$ is indeed an equivalence
relation: the symmetric property is immediate, while reflexivity follows
from property (4) of adapted collection, and transitivity comes from
property (1) of adapted collection.  Furthermore this equivalence
relation has the
following two properties:
\begin{itemize}
\item if $G_1, G_2\in \tf -\f$ satisfies $G_1\leq G_2$, then $G_1\sim G_2$ (immediate
from the definition of $\sim$).
\item if $G_1, G_2\in \tf -\f$ and $g\in \Gamma$, then $G_1\sim G_2 \Leftrightarrow gG_1g^{-1}\sim
gG_2g^{-1}$ (follows from property (2) of adapted collection).
\end{itemize}
We denote by $[\tf - \f]$ the set of equivalence classes of elements
in $\tf -\f$ under the above equivalence relation, and for $G \in
\tf -\f$, we will write $[G]$ for the corresponding equivalence
class.  Note that by the second property above, the $\Gamma$-action
by conjugation on $\tf -\f$ preserves equivalence classes, and hence
descends to a $\Gamma$-action on $[\tf -\f]$.  We let $I$ be a
complete set of representatives $[G]$ of the $\Gamma$-orbits in
$[\tf -\f]$.  Finally, we define for $G\in \tf -\f$ the subgroup:
$$N_\Gamma[G]:= \{ g\in \Gamma \hskip 5pt | \hskip 5pt [gGg^{-1}] = [G]\}$$
which is precisely the isotropy group of $[G] \in [\tf -\f]$ under
the $\Gamma$-action induced by conjugation.  Finally, define a
family of subgroups $\tf[G]$ of the group $N_\Gamma[G]$ by:
$$\tf[G] := \{K \subset N_\Gamma[G] \hskip 5pt | \hskip 5pt K \in
\tf -\f, [K]=[G] \} \hskip 5pt \cup \hskip 5pt \{K \subset
N_\Gamma[G] \hskip 5pt | \hskip 5pt K \in \f\}$$ Observe that the
notions defined above (introduced in \cite{LW}) make sense for {\it
any} equivalence relation on $\tf - \f$ satisfying the two
properties above.

Now \cite[Theorem 2.3]{LW} states that for any equivalence relation
$\sim$ on the elements in $\tf-\f$ satisfying the two properties
above (and with the notation used in the previous paragraph), the
$\Gamma$-CW-complex $X$ defined by the cellular $\Gamma$-pushout
depicted below is a model for $E_{\tf}(\Gamma)$.

\vskip 5pt

$$\xymatrix{\coprod_{[H]\in I} \g \times_{N_{\g}[H]} E_{\f \cap N_{\g}[H]}(N_{\g}[H])   \ar[d]_\alpha
 \ar[rrr]^\beta & & &
E_{\f}(\g) \ar[d]\\
\coprod_{[H]\in I} \g \times_{N_{\g}[H]} E_{\tf [H]}(N_{\g}[H])
\ar[rrr] & & & X}
$$

\vskip 5pt

In the above cellular
$\Gamma$-pushout, L\"uck-Weiermann require either (1) $\alpha$ is the disjoint union
of cellular $N_\Gamma[H]$-maps ($[H]\in I$), $\beta$ is an inclusion of
$\Gamma$-CW-complexes, or (2) $\alpha$ is the disjoint union
of inclusions of $N_\Gamma[H]$-CW-complexes ($[H]\in I$), $\beta$ is a
cellular $\Gamma$-map.

We now proceed to verify that, for the equivalence relation we have
defined using the adapted family $\{H_{\alpha}\}_{\alpha \in I}$,
the left hand terms in the cellular $\Gamma$-pushout given above
reduce to precisely the left hand terms appearing in the statement
of our proposition.  This boils down to two claims:

\vskip 5pt

\noindent {\bf Claim 1:}  For any $G\in \tf -\f$, we have the
equality $N_\Gamma[G] = H$ where $H$ is the unique element in
$\{H_{\alpha}\}_{\alpha \in I}$ satisfying $G\leq H$.

\vskip 5pt

To see this, we first note that there indeed is a unique $H\in
\{H_{\alpha}\}_{\alpha \in I}$ satisfying $G\leq H$, for if there
were two such groups $H_1\neq H_2$, then we would immediately see
that $H_1\cap H_2 \geq G \in \tf -\f$, contradicting the property
(1) of an adapted collection.  Next we observe that if $h\in H$,
then $hGh^{-1} \leq hHh^{-1} = H$, and hence that $[hGh^{-1}]=[G]$,
which implies the containment $H \leq N_\Gamma[G]$.  Conversely, if
$k\in N_\Gamma[G]$, then we have that $[G]=[kGk^{-1}]$, and so from
the definition of the equivalence relation there must exist some
$\bar H \in \{H_{\alpha}\}_{\alpha \in I}$ with $G\leq \bar H$ and
$kGk^{-1} \leq \bar H$.  Since we already know that $G\leq H$, the
uniqueness forces $\bar H=H$, and thus that $kGk^{-1} \leq H$. This
in turn tells us that $H\cap k^{-1}Hk \geq G \in \tf -\f$, and
property (1) of an adapted collection now forces $H= k^{-1}Hk$,
which implies that $k\in N_\Gamma (H)$.  But property (3) of an
adapted collection forces the group $H$ to be self-normalizing,
giving $k\in H$, and completing the proof of the reverse inclusion.

\vskip 5pt

\noindent {\bf Claim 2:}  For any $G\in \tf -\f$, the family
$\tf[G]$ on the group $N_\Gamma[G]=H$ (see the previous Claim)
coincides with the restriction $\tf \cap H$ of the family $\tf$ to
the subgroup $H$ (i.e. consisting of all elements in $\tf$ that lie
within $H$).

\vskip 5pt

Note that the containment $\tf[G] \subset \tf \cap H$ is obvious
from the definition of $\tf[G]$.  For the opposite containment, let
$K\in \tf \cap H \subset \tf$, and observe that $K\leq H$ and either
$K\in \f$, or $K\in \tf -\f$.  In the first case, we have $K\in \f
\cap H \subset \tf [G]$, while in the second case, we have that
$[K]=[G]$ by the definition of the equivalence relation, and hence
again we have $K\in \tf[G]$.  This gives us the containment $\tf
\cap H \subset \tf[G]$, giving us the Claim.

\vskip 5pt

Having established our two Claims, we can now substitute the
expressions from the Claims for the corresponding ones in the
L\"uck-Weiermann diagram.  Finally, we comment on the indices in the
disjoint sums appearing in the right hand of the diagrams.  In the
expression of L\"uck-Weiermann, the disjoint sum is taken over $I$,
a complete system of representatives $[G]$ of the $\Gamma$-orbits in
$[\tf -\f]$.  But observe that from the definition of the
equivalence relation we are using, classes in $[\tf - \f]$ can be
bijectively identified with groups $H\in \{H_{\alpha}\}_{\alpha \in
I}$ (by associating each class in $[\tf - \f]$ with the unique
element in $\{H_{\alpha}\}_{\alpha \in I}$ containing all the
elements in the class).  Since it is clear that the $\Gamma$-action
on $[\tf -\f]$ coincides (under the bijection above) with the
$\Gamma$-action on the set $\{H_{\alpha}\}_{\alpha \in I}$, we can
replace the system of representatives $I$ by the system of
representatives $\mathcal H$.  This completes the proof of the
proposition.
\end{proof}

We now specialize to the case where $\f =\fin$ and $\tf = \vc$, and
recall that Bartels \cite{Bar03} has established that for any group
$\Gamma$, the relative assembly map:
$$H_*^{\Gamma}(E_{\fin}(\Gamma);\mathbb K\mathbb Z^{-\infty})
\rightarrow H_*^{\Gamma}(E_{\vc}(\Gamma);\mathbb K\mathbb
Z^{-\infty})$$ is split injective.  We denote by
$H_*^{\Gamma}(E_{\fin}(\Gamma)\rightarrow E_{\vc}(\Gamma))$ the
cokernel of the relative assembly map. Now applying the induction
structure on this equivariant generalized homology theory, an
immediate consequence of the previous proposition is the following:

\begin{corollary}
Given the group $\Gamma$, assume that the collection of subgroups
$\{H_\alpha\}_{\alpha \in I}$ is adapted to the pair $(\fin, \vc)$.
If $\mathcal H$ be a complete set of representatives of the
conjugacy classes within $\{H_\alpha\}$, then we have a splitting:
$$H_*^{\Gamma}(E_{\vc}(\Gamma);\mathbb K\mathbb Z^{-\infty})
\cong H_*^{\Gamma}(E_{\fin}(\Gamma);\mathbb K\mathbb Z^{-\infty})
\oplus \bigoplus_{H\in \mathcal H}H_*^{H}(E_{\fin}(H)\rightarrow
E_{\vc}(H)).$$
\end{corollary}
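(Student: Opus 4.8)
The plan is to apply the Proposition just proved with the specialization $\f = \fin$ and $\tf = \vc$, and then to simplify the right-hand terms of the resulting $\Gamma$-pushout using the induction structure of the equivariant homology theory together with Bartels' splitting result. First I would invoke the Proposition to conclude that $X$, defined by the cellular $\Gamma$-pushout
$$\xymatrix{\coprod_{H\in \mathcal H} \Gamma \times_H E_{\fin}(H) \ar[d]_\alpha \ar[rrr]^\beta & & & E_{\fin}(\g) \ar[d]\\ \coprod_{H\in \mathcal H} \Gamma \times_H E_{\vc}(H) \ar[rrr] & & & X,}$$
is a model for $E_{\vc}(\Gamma)$. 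Here the upper right corner $E_{\fin}(\g)$ is a model for $E_{\fin}(\Gamma)$, exactly the term we want to split off.

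Next I would apply the generalized equivariant homology theory $H_*^\Gamma(-;\mathbb K\mathbb Z^{-\infty})$ to this $\Gamma$-pushout. Since the pushout is a homotopy pushout of $\Gamma$-CW-complexes (guaranteed by the cofibration hypotheses (1)/(2) on $\alpha,\beta$ in the Proposition), applying an equivariant homology theory produces a Mayer--Vietoris long exact sequence relating $H_*^\Gamma$ of the four corners. The key computational simplification is to rewrite the homology of the induced spaces: by the induction structure of the theory, for each $H\in\mathcal H$ one has natural isomorphisms $H_*^\Gamma(\Gamma\times_H E_{\fin}(H);\mathbb K\mathbb Z^{-\infty})\cong H_*^H(E_{\fin}(H);\mathbb K\mathbb Z^{-\infty})$ and likewise $H_*^\Gamma(\Gamma\times_H E_{\vc}(H);\mathbb K\mathbb Z^{-\infty})\cong H_*^H(E_{\vc}(H);\mathbb K\mathbb Z^{-\infty})$, so the left-hand column of the pushout contributes $\bigoplus_{H\in\mathcal H}$ of the $H$-equivariant homology of $E_{\fin}(H)$ and $E_{\vc}(H)$ respectively.

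I would then analyze the Mayer--Vietoris sequence to extract the asserted direct-sum splitting. The map $\beta$ induces, on homology, the direct sum over $H\in\mathcal H$ of the relative assembly maps $H_*^H(E_{\fin}(H);\mathbb K\mathbb Z^{-\infty})\to H_*^H(E_{\vc}(H);\mathbb K\mathbb Z^{-\infty})$ together with the assembly map $H_*^\Gamma(E_{\fin}(\Gamma);\mathbb K\mathbb Z^{-\infty})\to H_*^\Gamma(E_{\vc}(\Gamma);\mathbb K\mathbb Z^{-\infty})$ for $\Gamma$ itself. The crucial input is Bartels' theorem \cite{Bar03}: for every group, hence for $\Gamma$ and for each $H$, the relative assembly map from $\fin$ to $\vc$ is \emph{split injective}. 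This splitting collapses the Mayer--Vietoris boundary maps, causing the long exact sequence to break into short exact sequences that are themselves split; the cokernel of the $\Gamma$-assembly map is then identified with $\bigoplus_{H\in\mathcal H} H_*^H(E_{\fin}(H)\to E_{\vc}(H))$, the direct sum of the cokernels of the individual relative assembly maps, exactly as in the statement.

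The main obstacle is bookkeeping rather than deep theory: one must verify that the maps induced by $\alpha$ and $\beta$ on equivariant homology are compatible with the induction isomorphisms and genuinely reduce to the relative assembly maps, so that Bartels' splitting can be applied summand-by-summand to conclude that every connecting homomorphism vanishes. Once the Mayer--Vietoris sequence is seen to degenerate into split short exact sequences, the claimed splitting of $H_*^\Gamma(E_{\vc}(\Gamma);\mathbb K\mathbb Z^{-\infty})$ into the $\fin$-part plus the sum of relative cokernels follows immediately.
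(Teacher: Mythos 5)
Your proposal is correct and takes essentially the same route as the paper: the paper obtains this corollary as an ``immediate consequence'' of the preceding Proposition by specializing to the pair $(\fin,\vc)$, applying the equivariant homology theory with its induction structure to the cellular $\Gamma$-pushout, and invoking Bartels' split injectivity of the relative assembly maps, which is exactly the Mayer--Vietoris argument you spell out. One minor bookkeeping correction that does not affect the argument: in the pushout, the summed relative assembly maps $H_*^{H}(E_{\fin}(H);\mathbb K\mathbb Z^{-\infty})\rightarrow H_*^{H}(E_{\vc}(H);\mathbb K\mathbb Z^{-\infty})$ are induced by the left vertical map $\alpha$, not by $\beta$ (which instead induces the map $\bigoplus_{H\in\mathcal H}H_*^{H}(E_{\fin}(H);\mathbb K\mathbb Z^{-\infty})\rightarrow H_*^{\Gamma}(E_{\fin}(\Gamma);\mathbb K\mathbb Z^{-\infty})$), and it is the split injectivity of the former, applied summand-by-summand to each $H\in\mathcal H$, that kills the connecting homomorphisms and yields the claimed splitting.
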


Next we recall that the authors established in \cite[Theorem
2.6]{LO} that in the case where $\Gamma$ is hyperbolic relative to a
collection of subgroups $\{H_i\}_{i=1}^k$ (assumed to be pairwise
non-conjugate), then the collection of subgroups consisting of:
\begin{enumerate}
\item All conjugates of $H_i$ (these will be called {\it peripheral subgroups}).
\item All maximal virtually infinite cyclic subgroups $V$ such that
$V \nsubseteq g H_i g^{-1}$, for all $i=1, \cdots k$, and for all $g
\in \g$.
\end{enumerate}
is adapted to the pair of families $(\fin, \vc)$.  Applying the
previous corollary to this special case, we get:

\begin{corollary}
Assume that the group $\Gamma$ is hyperbolic relative to the
collection of subgroups $\{H_i\}_{i=1}^k$ (assumed to be pairwise
nonconjugate). Let $\mathcal V$ be a complete set of representatives
of the conjugacy classes of maximal virtually infinite cyclic
subgroups inside $\Gamma$ which {\it cannot} be conjugated within
any of the $H_i$.  Then we have a splitting:
$$H_*^{\Gamma}(E_{\vc}(\Gamma);\mathbb K\mathbb Z^{-\infty})
\cong H_*^{\Gamma}(E_{\fin}(\Gamma);\mathbb K\mathbb Z^{-\infty})
\oplus \bigoplus_{i=1}^k H_*^{H_i}(E_{\fin}(H_i)\rightarrow
E_{\vc}(H_i))$$
$$\oplus \bigoplus _{V\in \mathcal V}H_*^{V}(E_{\fin}(V)\rightarrow
*).$$
\end{corollary}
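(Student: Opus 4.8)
The plan is to obtain this statement as a direct specialization of the preceding corollary, with the relative-hyperbolicity input supplied by \cite[Theorem 2.6]{LO}. First I would invoke that theorem: since $\Gamma$ is hyperbolic relative to the pairwise non-conjugate collection $\{H_i\}_{i=1}^k$, the collection consisting of (1) all $\Gamma$-conjugates of the $H_i$ and (2) all maximal virtually infinite cyclic subgroups $V$ with $V \nsubseteq gH_ig^{-1}$ for every $i$ and every $g\in\g$, is adapted to the pair $(\fin,\vc)$. This puts us squarely in the hypotheses of the previous corollary, which then applies verbatim with $\{H_\alpha\}_{\alpha\in I}$ taken to be precisely this adapted collection.

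The substantive step is then to identify a complete set $\mathcal H$ of representatives of the conjugacy classes within $\{H_\alpha\}$. The collection splits into two conjugation-invariant pieces. The peripheral part consists of all conjugates of the $H_i$; since the $H_i$ are assumed pairwise non-conjugate, the conjugacy classes arising here are exactly $[H_1],\dots,[H_k]$, with representatives $H_1,\dots,H_k$. The second part consists of the maximal virtually infinite cyclic subgroups not conjugate into any $H_i$, and by the very definition of $\mathcal V$, a complete set of representatives of their conjugacy classes is $\mathcal V$ itself. These two systems of representatives are disjoint: any $V\in\mathcal V$ satisfies $V\nsubseteq gH_ig^{-1}$ for all $i,g$, hence in particular $V$ is not conjugate to any $H_i$, so $[V]\neq[H_i]$. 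Thus I may take $\mathcal H = \{H_1,\dots,H_k\}\sqcup\mathcal V$, and feeding this into the splitting from the previous corollary yields
$$H_*^{\Gamma}(E_{\vc}(\Gamma);\mathbb K\mathbb Z^{-\infty})\cong H_*^{\Gamma}(E_{\fin}(\Gamma);\mathbb K\mathbb Z^{-\infty})\oplus\bigoplus_{i=1}^k H_*^{H_i}(E_{\fin}(H_i)\to E_{\vc}(H_i))\oplus\bigoplus_{V\in\mathcal V}H_*^{V}(E_{\fin}(V)\to E_{\vc}(V)).$$

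The only remaining discrepancy with the asserted formula is in the last summand, where $E_{\vc}(V)$ must be replaced by a point. Here I would invoke the standard fact that whenever a group lies inside its own family of isotropy subgroups, the associated classifying space is equivariantly contractible: each $V\in\mathcal V$ is itself virtually infinite cyclic, hence $V$ belongs to the family $\vc$ of virtually cyclic subgroups of $V$, so a single point $*$ (with trivial $V$-action) is a model for $E_{\vc}(V)$. Consequently $H_*^{V}(E_{\fin}(V)\to E_{\vc}(V))=H_*^{V}(E_{\fin}(V)\to *)$, which transforms the display into exactly the claimed formula. I expect no serious obstacle in this deduction; the only points requiring genuine care are the bookkeeping that collapses the peripheral conjugacy classes to precisely the $k$ representatives $H_i$ and the verification that the two pieces of $\mathcal H$ are disjoint, both of which follow immediately from the pairwise non-conjugacy hypothesis together with the defining property of $\mathcal V$.
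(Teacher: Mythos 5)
Your proposal is correct and follows essentially the same route as the paper: the paper likewise obtains this corollary by feeding the adapted collection from \cite[Theorem 2.6]{LO} (conjugates of the $H_i$ together with the maximal virtually infinite cyclic subgroups not conjugate into any $H_i$) into the preceding corollary, with the $H_i$ and $\mathcal V$ serving as the conjugacy-class representatives. Your explicit verifications --- that the two parts of $\mathcal H$ are disjoint, and that $E_{\vc}(V) \simeq *$ because $V$ itself lies in its own family $\vc$ --- are exactly the steps the paper leaves implicit, so nothing is missing.
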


The primary example of relatively hyperbolic groups are groups
$\Gamma$ acting with cofinite volume (but not cocompactly) on a
simply connected Reimannian manifold whose sectional curvature
satisfies $-b^2 \leq K \leq -a^2 <0$.  These groups are hyperbolic
relative to the ``cusp groups'', which one can take to be the
subgroups arising as stabilizers of ideal points in the boundary at
infinity of the Riemannian manifold.  We note that non-uniform
lattices in $O^+(n,1) = Isom(\mathbb H^n)$ are examples of
relatively hyperbolic groups, and for this class of groups, the cusp
groups are automatically $(n-1)$-dimensional crystallographic groups
(this is due to the fact that the horospheres have intrinsic
geometry isometric to $\mathbb R^{n-1}$). Observe that 23 of the
groups we are considering (see Figure 2) are non-uniform lattices in
$O^+(3,1)$, and hence are relatively hyperbolic groups, relative to
a collection of subgroups, each of which is isomorphic to a
$2$-dimensional crystallographic group.  In the situation of the
groups we are considering, the situation is even further simplified
by the following observations:
\begin{itemize}
\item Pearson \cite{Pe98} showed that for {\it any} $2$-dimensional
crystallographic group $H$, the relative assembly map is an
isomorphism for $n\leq 1$, and hence that:
$$H_n^{H}(E_{\fin}(H)\rightarrow E_{\vc}(H)) = 0$$
for $n\leq 1$.
\item the authors in \cite[Section 3]{LO} gave a general procedure
for classifying the maximal virtually cyclic subgroups of Coxeter
groups acting on $\mathbb H^3$.  The groups fall into three types,
with infinitely many conjugacy classes of type II and type III, and
only {\it finitely many} conjugacy classes of type I subgroups.
Furthermore, the relative assembly map is an isomorphism (for $n\leq
1$) for all groups of type II and III.  This is discussed in more
detail in Section 4.
\item Berkove, Farrell, Juan-Pineda, and Pearson \cite{BFPP00}
established that the Farrell-Jones isomorphism conjecture holds for
all lattices $\Gamma$ in hyperbolic space (and $k\leq 1$), and hence
that one has isomorphisms:
$$K_n(\z \Gamma)\cong H_n^{\Gamma}(E_{\vc}(\Gamma);\mathbb K\mathbb Z^{-\infty})$$
for all $n\leq 1$.
\end{itemize}
Combining these observations with the previous corollary yields the
following:

\begin{corollary}
Let $\Gamma \leq O^+(3,1)$ be any Coxeter group arising as a lattice
(uniform or non-uniform), and let $\{V_i\}_{i=1}^k$ be a complete
set of representatives for conjugacy classes of type I maximal
virtually infinite cyclic subgroups of $\Gamma$.  Then we have, for
all $n\leq 1$, isomorphisms:
$$K_n(\z \Gamma)\cong H_n^{\Gamma}(E_{\fin}(\Gamma);\mathbb K\mathbb Z^{-\infty})
\oplus \bigoplus_{i=1}^k H_n^{V_i}(E_{\fin}(V_i)\rightarrow
*).$$
\end{corollary}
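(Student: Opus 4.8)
The plan is to assemble the three bulleted observations with the preceding corollary, so that essentially no new computation is needed at this stage; the argument is a bookkeeping reduction. First I would use the theorem of Berkove, Farrell, Juan-Pineda, and Pearson \cite{BFPP00} to replace the left-hand side $K_n(\z\g)$ by $H_n^{\g}(E_{\vc}(\g);\mathbb K\mathbb Z^{-\infty})$, valid for all $n\leq 1$ since each $\g$ under consideration is a lattice in $O^+(3,1)$. This converts the assertion into an identity purely about the $\vc$-equivariant homology of $\g$, to which the preceding corollary can be applied.

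The second step is to recognize every such $\g$, uniform or not, as a relatively hyperbolic group and feed it into the preceding corollary. For a uniform (cocompact) lattice the peripheral collection is empty and $\g$ is word-hyperbolic; for a non-uniform lattice the peripheral subgroups $\{H_i\}_{i=1}^m$ are the stabilizers of the ideal vertices of $\Delta^3$, each a $2$-dimensional crystallographic group, as recorded in the discussion preceding the corollary. In either case the preceding corollary yields, for all $n\leq 1$,
$$H_n^{\g}(E_{\vc}(\g)) \cong H_n^{\g}(E_{\fin}(\g)) \oplus \bigoplus_{i=1}^m H_n^{H_i}(E_{\fin}(H_i)\rightarrow E_{\vc}(H_i)) \oplus \bigoplus_{V\in \mathcal V} H_n^{V}(E_{\fin}(V)\rightarrow *),$$
where $\mathcal V$ ranges over conjugacy classes of maximal virtually infinite cyclic subgroups of $\g$ that cannot be conjugated into any $H_i$, and where the final summand is written with target $*$ because $E_{\vc}(V)$ is a point for virtually cyclic $V$.

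It then remains to discard the two middle families of summands. For the cusp terms I would invoke Pearson's theorem \cite{Pe98}: since each $H_i$ is $2$-dimensional crystallographic, $H_n^{H_i}(E_{\fin}(H_i)\rightarrow E_{\vc}(H_i))=0$ for $n\leq 1$, so that entire sum vanishes (and it is empty to begin with in the uniform case). For the $\mathcal V$-terms I would appeal to the type~I/II/III classification of \cite[Section 3]{LO}: only finitely many conjugacy classes are of type~I, and for every $V$ of type~II or III the relative assembly map is an isomorphism for $n\leq 1$, whence $H_n^{V}(E_{\fin}(V)\rightarrow *)=0$ for such $V$. Hence only the finitely many type~I representatives $\{V_i\}_{i=1}^k$ survive, and substituting these vanishings into the displayed splitting produces exactly the claimed isomorphism.

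The one point requiring care is the uniform bookkeeping across the cocompact and non-cocompact cases: the hard part will be to check that the cocompact groups genuinely fit the framework of the preceding corollary by taking the peripheral collection to be empty, so that the relatively hyperbolic splitting specializes correctly to the word-hyperbolic one, and that after deleting the type~II and type~III classes the index set $\mathcal V$ reduces precisely to the finite collection of type~I representatives named in the statement. Everything else is a direct substitution of the quoted vanishing results, with no spectral-sequence input needed here.
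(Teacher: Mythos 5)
Your proposal is correct and follows essentially the same route as the paper, which likewise obtains the corollary by feeding the lattices (with empty peripheral collection in the cocompact case) into the relatively hyperbolic splitting of the preceding corollary and then deleting the cusp terms via Pearson's vanishing result and the type~II/III terms via the classification in \cite[Section 3]{LO}, with \cite{BFPP00} identifying $K_n(\z\Gamma)$ with the $\vc$-equivariant homology for $n\leq 1$. The one care point you flag (the empty-peripheral specialization for uniform lattices) is treated implicitly in the paper, so your write-up is, if anything, slightly more complete.
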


In the next sections, we will implement this corollary to compute
the lower algebraic $K$-theory of the integral group rings of all 32
of the $3$-simplex hyperbolic reflection groups.

\section{Maximal infinite $\vc_{\infty}$ subgroups.}

In this section, we proceed to classify the maximal infinite
virtually cyclic ($\vc_{\infty}$) subgroups arising in our groups.
Let us start by briefly recalling some of the results from Section 3
of \cite{LO}. First of all, for a lattice $\g$ in $O^+(n,1)$,
infinite $\vc$ subgroups are of two types: those that fix a single
point in the boundary at infinity, and those that fix a pair of
points in the boundary at infinity.  We call subgroups of the first
type {\it parabolic}, and those of second type {\it hyperbolic}.
Note that every parabolic subgroup can be conjugated into a cusp
group; for the purpose of our classification, we will ignore these
subgroups. The subgroups of hyperbolic type automatically stabilize
the geodesic joining the pair of fixed points in the boundary at
infinity.  Furthermore the geodesic they stabilize will project to a
periodic curve in the quotient space $\mathbb H^n/\g$.  Note that
conversely, stabilizers of periodic geodesics are infinite $\vc$
subgroups of $\g$.  This implies that the {\it maximal} hyperbolic
type infinite $\vc$ subgroups of $\g$ are in bijective
correspondence with stabilizers of periodic geodesics.

We now specialize to the case where $n=3$, and $\g$ is a Coxeter
group.  In this situation, we can subdivide the family of periodic
geodesics into three types:
\begin{itemize}
\item a geodesic whose projection has non-trivial intersection with
the interior of the polyhedron $\mathbb H^3/\g$, which we call
type III.
\item a geodesic whose projection lies in the boundary of the
polyhedron $\mathbb H^3/\g$, but does not lie inside the
1-skeleton of $\mathbb H^3/\g$, which we call type II.
\item a geodesic whose projection lies in the 1-skeleton of the
polyhedron $\mathbb H^3/\g$, which we call type I.
\end{itemize}
For geodesics of type III, it is easy to see that the stabilizer of
the geodesic must be isomorphic to either $\z$ or $D_\infty$. For
geodesics of type II, the stabilizer is always isomorphic to either
$\z_2\times \z$ or $\z_2\times D_\infty$.  The main purpose of this
section will be to classify stabilizers of type I geodesics for all
32 groups which occur as hyperbolic 3-simplex reflection groups.

We start by outlining our approach: all the groups we are
considering have fundamental domain consisting of a 3-dimensional
simplex in $\mathbb H^3$ (possibly with some ideal vertices).  So up
to conjugacy, for each of the groups we are considering, we can have
{\it at most six} distinct stabilizers of type I geodesic (one for
each edge in the fundamental domain, fewer in the presence of ideal
vertices). But this is actually an overcount, as one could
potentially have a type I geodesic whose projection into the
fundamental domain passes through several of the edges.  So the
first step is to understand {\it how many} distinct stabilizers (up
to conjugacy) one obtains.

Let us explain how one can find out the number of distinct
stabilizers.  Note that, at every (non-ideal) vertex $v$ of our
fundamental domain 3-simplex in $\mathbb H^3$, we can consider a
small $\epsilon$-sphere $S_v$ centered at $v$.  Now the tessellation
of $\mathbb H^3$ by copies of the fundamental domain induces a
tessellation of $S_v$ by isometric spherical triangles.  In fact,
the tessellation of $S_v$ is the one naturally associated with the
special subgroup of the Coxeter group $\g$ that stabilizes the
vertex $v$. Now note that, if we were to label the three edges of
the 3-simplex incident to $v$, we get a corresponding label of the
three vertices of a spherical triangle in the tessellation of $S_v$.
One can extend this labeling via reflections, both for the
tessellation of $\mathbb H^3$ and the tessellation of $S_v$.

Now given a periodic geodesic of type I, with a portion of the
geodesic projecting to the edge $e$ in the 3-simplex, with $e$
adjacent to the vertex $v$, one can easily ``read off'' from the
labeled tessellation of $S_v$ which edge extends the geodesic.
Indeed, this will be picked up by the label of the vertex in the
tessellation of $S_v$ which is antipodal to the labeled vertex
corresponding to $e$.  In this manner, one can easily decide the
number of distinct stabilizers of type I geodesics that arise for
the 32 groups we are considering.

To recognize the tessellations arising for the various $S_v$, one
now notes that the isometry group of each of these tessellations can
be obtained by looking at the stabilizer $\g _v$ of the vertex $v$
in the group $\g$.  These stabilizers are finite special subgroups,
of the Coxeter group $\g$, generated by three of the four canonical
generators of $\g$. From the classification of the 32 hyperbolic
3-simplex groups, it is easy to list out all the finite parabolic
subgroups we need to consider: there are eight of these, namely
$\z_2\times D_2$, $\z_2\times D_3$, $\z_2\times D_4$, $\z_2\times
D_5$, $\z_2\times D_6$, $[3,3]\cong S_4$, $[3,4]\cong \z_2 \times
S_4$, and $[3,5] \cong \z_2 \times A_5$.

The next step is to identify the stabilizers of the corresponding
geodesics.  In the situation we are considering, all the type I
geodesics $\eta$ that appear have stabilizer $\stab_{\g}(\eta)$
acting with fundamental domain an interval. In fact, the interval
can be identified with the quotient space $\eta/\stab_{\g}(\eta)
\subset \mathbb H^3/\g$, which will be a union of edges in the
1-skeleton of the 3-simplex $\mathbb H^3/\g$.  Hence the group
$\stab_{\g}(\eta)$ can be identified using Bass-Serre theory: it
will be the fundamental group of a graph of group, where the graph
of group consists of a single edge joining two vertices, with
edge/vertex groups which can be explicitly found from the
tessellations.  We will say that the geodesic (or sometimes the edge
in the 1-skeleton) {\it reflects} at the two endpoint vertices.

Indeed, the edge group $G_e$ will be precisely the stabilizer of one
of the edges in $\eta/\stab_{\g}(\eta) \subset \mathbb
H^3/\g$. On the other hand, the vertex groups $G_v,G_w$ can be
found by looking at each of the two endpoint vertices $v,w$ for
$\eta/\stab_{\g}(\eta)$, and studying the spherical tessellations
of $S_v,S_w$.  Note that we are trying to identify elements in
$\g$, which stabilize the vertex $v$ (respectively $w$), and
{\it additionally} map the geodesic $\eta$ through $v$ to itself. In
particular, it must map the pair of antipodal vertices $\eta^{\pm}$
(corresponding to the incoming/outgoing $\eta$-directions) in the
tessellation of $S_v$ to themselves.  The subgroup $G_e\subset G_v$
can be identified with the index 2 subgroup consisting of elements
$G_v$ which fix both of the points $\eta ^{\pm}$.  Now there is an
obvious map which permutes the two points $\eta ^+$ and $\eta ^-$:
namely the reflection in the equator equidistant from these two
points.  But it is not clear that this reflection preserves the
tessellation of $S_v$; in some cases, one will need to reflect in
the equator, and then rotate by a certain angle along the $\eta^{\pm}$
axis, in order to obtain an element in $\g_v$.  Note that if the
reflection in the equator {\it preserves} the tessellation, then we
immediately obtain that $G_v\cong G_e \times \z_2$.  If the
reflection in the equator {\it does not preserve} the tessellation,
then we obtain that $G_v\cong G_e \rtimes \z_2$.  One can perform
the same analysis at the vertex $w$, and hence find an expression
for $\stab_{\g}(\eta)$ as an amalgamation of the groups $G_v,G_w$
over the index 2 subgroups $G_e$.

We make two observations: first of all, the stabilizer of an edge
will always be a special subgroup of $\g$, generated by a pair of
canonical generators in $\g$. In particular, the group $G_e$ will
always be a dihedral group $D_k$ for some $k$.  Now the vertex
groups are of two types: (1) if the reflection in the equator
preserves the tessellation, we obtain $G_v\cong \z_2\times D_k$, or
(2) if the reflection in the equator does not preserve the
tessellation, then one can explicitly read off the semi-direct
product structure from the tessellation, and in fact it is easy to
see that $G_v\cong D_{2k}$.  In the table below, we list out, for
each of the finite parabolic subgroups we need to consider, the
edges that reflect, as well as the corresponding $G_v$. Let us
explain the notation used in the table: the first column gives the
various finite parabolic subgroups that occur, the second column
lists the angles that appear in the spherical triangles of the
corresponding tessellation of $S_v$.  The remaining three columns
are ordered from smallest angle to largest, and expresses whether
(1) the corresponding edge extends (i.e. does not reflect) at $v$,
and (2) if it reflects, the corresponding subgroup $G_v$.

\renewcommand{\arraystretch}{1.5}
\begin{equation*}
\begin{array}{|c|c|c|c|c|}
\hline
  \z_2\times D_2 & \pi/2,\pi/2,\pi/2 & \z_2\times D_2 & \z_2\times D_2 & \z_2\times D_2 \\ \hline
  \z_2\times D_3 & \pi/3,\pi/2,\pi/2 & \z_2\times D_3 & \text{extends} & \text{extends} \\ \hline
  \z_2\times D_4 & \pi/4,\pi/2,\pi/2 & \z_2\times D_4 & \text{extends} & \text{extends} \\ \hline
  \z_2\times D_5 & \pi/5,\pi/2,\pi/2 & \z_2\times D_5 & \text{extends} & \text{extends} \\ \hline
  \z_2\times D_6 & \pi/6,\pi/2,\pi/2 & \z_2\times D_6 & \text{extends} & \text{extends} \\ \hline
  S_4 & \pi/3,\pi/3,\pi/2 & \text{extends} & \text{extends} & D_4 \\ \hline
  \z_2 \times S_4 & \pi/4,\pi/3,\pi/2 & \z_2\times D_4 & D_6 & \z_2\times D_2 \\ \hline
  \z_2 \times A_5 & \pi/5,\pi/3,\pi/2 & D_{10} & D_6  & \z_2\times D_2\\ \hline
\end{array}
\end{equation*}

\vskip 5pt

\centerline{\bf Table 1: Finite parabolic subgroups \& local
behavior of edges.}

\vskip 20pt

From the Coxeter diagrams of the 32 groups we are considering, we
can now read off quite easily the number (up to conjugacy) of
stabilizers of type I geodesics.  We now proceed to summarize the
results of this procedure, which we list out in Tables 2, 3, and 4.
We remind the reader that, in addition
to these subgroups, there will also be (up to conjugacy) countably
infinitely many maximal $\vc$ subgroup of hyperbolic type isomorphic
to one of $\z, D_\infty, \z_2\times \z, \z_2\times D_\infty$ (coming
from stabilizers of type II and type III geodesics).  The list below
can be thought of as the ``exceptional'' maximal $\vc$ subgroups of
hyperbolic type.  Indeed, as we will see in the subsequent sections,
these will be the only maximal $\vc$ subgroups of hyperbolic type
that will actually contribute to the algebraic K-theory of the
ambient groups.

\subsection{The uniform lattices.}

There are 9 hyperbolic 3-simplex groups with fundamental domain a
compact 3-simplex in $\mathbb H^3$.  The number and type of
(non-finite) stabilizers of type I geodesics are listed in the
following table:

\renewcommand{\arraystretch}{1.5}
\begin{equation*}
\begin{array}{|c|c|}
\hline \g & \stab_{\g} \\
\hline [(3^{3},5)]  & D_6*_{D_3}D_6,
(D_2\times \z_2)*_{D_2}D_4 \; (\text{twice}),  D_{10}*_{D_5}D_{10}
\\ \hline [5,3,5] & D_2\times D_\infty,  D_6*_{D_3}D_6,  (D_5\times
\z_2)*_{D_5}D_{10} \; (\text{twice}) \\
\hline [(3^{3},4)] &
D_4\times D_\infty, D_6*_{D_3}D_6,  (D_2\times \z_2)*_{D_2}D_4
\;(\text{twice}) \\
\hline [3,5,3] & D_2\times D_\infty,
(D_3\times\z_2)*_{D_3}D_{6} (\text{twice}), D_{10}*_{D_5}D_{10}\\
\hline [5,3^{1,1}] & D_2\times D_\infty \;(\text{twice}),
D_6*_{D_3}D_6, D_{10}*_{D_5}D_{10}, (D_2\times \z_2)*_{D_2}D_4\\
\hline [4,3,5] &  D_2\times D_\infty, \; (\text{twice}), D_4\times
D_\infty, D_6*_{D_3}D_6, (D_5\times \z_2)*_{D_5}D_{10} \\
\hline [(3,5)^{[2]}] & D_2\times D_\infty \; (\text{twice}), D_6*_{D_3}D_6
(\text{twice}), D_{10}*_{D_5}D_{10} (\text{twice})\\
\hline [(3,4,3,5)] & D_2\times D_\infty \; (\text{twice}), D_4\times
D_\infty , D_6*_{D_3}D_6 (\text{twice}),D_{10}*_{D_5}D_{10}\\
\hline [(3,4)^{[2]} & D_2\times D_\infty \;(\text{twice}), D_4\times
D_\infty (\text{twice}), D_6*_{D_3}D_6 (\text{twice})\\
\hline
\end{array}
\end{equation*}

\centerline{\bf Table 2: Structure of subgroups of cocompact groups.}

\vskip 20pt

\subsection{One ideal vertex.}

We have nine such Coxeter groups, namely the groups: $[5,3^{[3]}]$,
$[5,3,6]$, $[3^2,4^2]$, $[4,3^{[3]}]$, $[3,3^{[3]}]$, $[3,4^{1,1}]$,
$[4,3,6]$, $[3,3,6]$, and $[3,4,4]$.  The number and type of
(non-finite) stabilizers of type I geodesics, as well as the cusp
subgroups are listed in the following table:

\renewcommand{\arraystretch}{1.5}
\begin{equation*}
\begin{array}{|c|c|c|c|}
\hline \g & & \stab_{\g} & \text{Cusp} \\
\hline [3,3^{[3]}] & 1 \;
\text{edge} & D_4*_{D_2}D_4 & [3^{[3]}] \\
\hline [3,3,6] & 1 \;
\text{edge} & (D_2\times \z_2)*_{D_2}D_4 & [3,6] \\
\hline
[5,3^{[3]}] & 2 \; \text{edges} & D_2\times D_\infty, \quad
D_{10}*_{D_5}D_{10} & [3^{[3]}] \\
\hline [5,3,6] & 2 \;
\text{edges} & D_2\times D_\infty, \quad (D_{5}\times
\z_2)*_{D_5}D_{10} & [3,6] \\
\hline [(3^2,4^2)] & 2 \;
\text{edges} & D_2\times D_\infty, \quad \quad D_{6}*_{D_3}D_{6} &
[4,4] \\
\hline [4,3^{[3]}] & 2 \; \text{edges} & D_2\times
D_\infty, \quad \quad D_4\times D_\infty & [3^{[3]}] \\
\hline
[3,4,4] & 2 \; \text{edges} & D_2\times D_\infty, \quad \quad
(D_{3}\times \z_2)*_{D_3}D_{6} & [4,4] \\
\hline [3,4^{1,1}] & 3 \;
\text{edges} & D_2\times D_\infty (\text{twice}),\quad \quad
D_{6}*_{D_3}D_{6} & [4,4] \\
\hline [4,3,6] & 3 \; \text{edges} &
D_2\times D_\infty (\text{twice}),\quad \quad D_4\times D_\infty &
[3,6] \\ \hline
\end{array}
\end{equation*}

\vskip 5pt

\centerline{\bf Table 3: Structure of subgroups of 1-ideal vertex
groups.}

\vskip 20pt

\subsection{Two ideal vertices.}

We have nine such Coxeter groups, namely the groups: $[(3,5,3,6)]$,
$[(3,4^3)]$, $[(3,4,3,6)]$, $[(3^3,6)]$, $[3^{[3,3]}]$,
$[6,3^{1,1}]$, $[3,6,3]$, $[6,3,6]$, and $[4,4,4]$.  Note that for
these groups, we have only one edge segment in the fundamental
domain to consider.

For the groups $[(3^3,6)]$ and $[3,6,3]$, the edge extends to one of
the non-compact edges, and hence we again have that there are no
periodic geodesics of type I.  So in both of these cases, we have
that the maximal virtually infinite $\vc$ subgroups of hyperbolic type
are isomorphic to $\z$, $D_\infty$, $\z_2\times \z$, or $\z_2\times
D_\infty$.

In the remaining cases, the edge reflects at both of its endpoints.
The stabilizers we obtain, as well as the two cusp subgroups, are
listed out in the following table:

\renewcommand{\arraystretch}{1.5}
\begin{equation*}
\begin{array}{|c|c|c|}
\hline
\g  & \stab_{\g} & \text{Cusp} \\ \hline
[(3,5,3,6)] & D_{10}*_{D_5}D_{10} & [3,6] \;(\text{twice}) \\ \hline
[(3,4^3)] & D_6*_{D_3}D_6 & [4,4] \;(\text{twice}) \\ \hline
[(3,4,3,6)] & D_4\times D_\infty & [3,6] \;(\text{twice}) \\ \hline
[3^{[3,3]}] & D_4*_{D_2}D_4 & [3^{[3]}] \;(\text{twice})\\ \hline
[6,3^{1,1}] & (D_2\times \z_2)*_{D_2}D_4 & [3,6] \;(\text{twice}) \\ \hline
[6,3,6] & D_3\times D_\infty & [3,6] \;(\text{twice}) \\ \hline
[4,4,4] & D_2\times D_\infty & [4,4] \;(\text{twice}) \\ \hline
\end{array}
\end{equation*}

\vskip 5pt

\centerline{\bf Table 4: Structure of subgroups of 2-ideal vertex
groups.}

\vskip 20pt

\subsection{Three ideal vertices.}

We have two such Coxeter group: $[6,3^{[3]}]$ and $[4^{1,1,1}]$.
Again, there will be no periodic geodesics of type I.  So we obtain
that the maximal virtually infinite $\vc$ subgroups of hyperbolic type
are isomorphic to $\z$, $D_\infty$, $\z_2\times \z$, or $\z_2\times
D_\infty$.

\subsection{Four ideal vertices.}

There are three such Coxeter groups, namely the groups $[3^{[\hskip
2pt]\times [\hskip 2pt]}]$, $[4^{[4]}]$ and $[(3,6)^{[2]}]$. It is
clear that these groups have no periodic geodesics of type I, and
hence the only maximal virtually infinite $\vc$ subgroups of hyperbolic
type in both of these groups are isomorphic to $\z$, $D_\infty$,
$\z_2\times \z$, or $\z_2\times D_\infty$.

\section{The algebraic K-theory of maximal finite subgroups.}

The only (maximal) finite subgroups which occur inside the 32 groups
we are interested in are, up to isomorphism, one of the following
groups (see the previous section):

\vskip 10pt

\noindent{\bf Maximal finite subgroups:} 1, $\mathbb Z/2$, $D_n$ for
$n=2,3,4,5,6,10$ (here $D_n$ denotes the dihedral group of order
$2n$), $D_2 \times \mathbb Z/2$, $D_6 \times \mathbb Z/2$, $S_4$,
$S_4 \times \mathbb Z/2$, and $A_5 \times \mathbb Z/2$.

\vskip 10pt

Note that these groups are precisely the various finite parabolic subgroups
(in the Coxeter sense) appearing amongst the 32 Coxeter groups we are
considering.  In the spectral sequence computing the homology
$H_*^\Gamma(E_{\fin}(\Gamma);\mathbb K\mathbb Z^{-\infty})$ the
$E^2$-term is given by the algebraic $K$-groups of the finite
subgroups. The non-trivial $K$-groups are listed in Table 5 at the
end of this section.

\vspace{.3cm}

For all but {\it four} of the finite groups in our list, their lower
algebraic $K$-theory is well known. The relevant reference are
listed below for each of these groups:

\vspace{.3cm}
\begin{itemize}
\item $\mathbb Z/2$: For the negative $K$-groups, we refer the reader
to \cite{C80a}; the fact that $K_{-1}(\mathbb Z[\mathbb Z/2])=0$ can
also be found in \cite[Theorem 10.6, page 695]{Bas68}. The vanishing
of  $\tilde{K}_0$ can be found in \cite[Corollary 5.17]{CuR87}. For
information  about the vanishing of $Wh$ we refer  the reader to
\cite{O89}.

\vspace{.2cm}
\item $D_2 \times \mathbb Z/2$: For the negative $K$-groups, we refer the reader
to \cite{C80a}. The formula in Bass \cite[Chapter 12]{Bas68} shows
also that $K_{-1}(\mathbb Z[D_2 \times \mathbb Z/2])=0$.  The
vanishing of $\tilde{K}_0$ can be found in \cite{Re76}, and for  the
information concerning  $Wh$ we refer the reader to \cite{Ma78},
\cite{Ma80},  \cite{O89}.

\vspace{.2cm}
\item $D_n, \;n=2,3,4$: For the vanishing of the negative $K$-groups, we
refer the reader to \cite{C80a}. For the $\tilde{K}_0$ we refer the
reader to \cite{Re76}. In  particular, the vanishing of
$\tilde{K}_0(\mathbb ZG)$ is proven for $G=D_3$ in \cite[Theorem
8.2]{Re76} and for $G=D_4$ in \cite[Theorem 6.4]{Re76}. For
information about $Wh$ we refer the reader to \cite{Ma78},
\cite{Ma80} and \cite{O89}.

\vspace{.2cm}
\item $D_5$: As far as we know the only $K$-groups found in the literature
are $\tilde{K}_0(\mathbb ZD_5) \cong 0$ (see \cite{Re76},
\cite{EM76}) and $K_{q}(\mathbb ZD_5)\cong 0$ for all $q \leq-2$
(see \cite{C80a}). To compute $K_{-1}(\mathbb ZD_5)$, we used
results that can be found in \cite{C80a}, and \cite{C80b}, and to
compute $Wh(D_5)$, we used results that can be found in
\cite{Ma78}, \cite{Ma80} and  \cite{O89}  (see the details in the
Section 5.1).

\vspace{.2cm}
\item $D_{10}$: As far as we know the only $K$-groups found in the
literature are $\tilde{K}_0(\mathbb ZD_{10}) \cong 0$ (see
\cite{Re76}, \cite{EM76}) and $K_{q}(\mathbb ZD_{10})\cong 0$ for
all $q \leq-2$ (see \cite{C80a}). For the $K_{-1}(\mathbb ZD_{10})$,
we used the results found in \cite{C80a}, and \cite{C80b}, and for
$Wh(D_{10})$, we used results that can be found in  \cite{Ma78},
\cite{Ma80} and  \cite{O89} (see the details in the Section 5.2).

\vspace{.2cm}
\item $D_6$: See the discussion in Section 5.1. The whitehead groups
$Wh_q(D_6)$  for $q \leq 1$ can also be found in \cite[Section
3]{Pe98}, and \cite[Section 5]{Or04}.

\vspace{.2cm}
\item $D_4 \times \mathbb Z/2$: Ortiz in \cite[Section 5]{Or04} using
results from \cite{C80a} \cite{C80b}, \cite{CuR87}, \cite{O89} and
\cite{Ma06} showed that $K_{q}(\z[D_4 \times \z/2])=0$, $q\leq -1$,
$\tilde{K}_0(\mathbb Z[D_4 \times \z/2]) \cong \z/4$, and that
$Wh(D_4 \times \z/2)$ is trivial.

\vspace{.2cm}
\item $S_4$: computed in \cite{BFPP00}

\vspace{.2cm}
\item $S_4 \times \mathbb Z/2$: computed in \cite[Section 5]{Or04}.

\end{itemize}

\vspace{.2cm} For the remaining groups in our list, we detail the
computations in the next few subsections.

\subsection{The Lower algebraic $K$-theory of $D_6 \times \mathbb Z/2$}

Carter shows in \cite{C80a} that for $q \leq -2$, $K_q(\mathbb Z
F)=0$ when $F$ is a finite group. To calculate $K_{-1}(\mathbb Z[D_6
\times \mathbb Z/2])$, we use the following formula due to Carter
\cite[Theorem 3]{C80b}.

Let $G$ be a group of order $n$, let $p$ denote a prime number, let
${\hat{\mathbb Z}}_p$ denote the $p$-adic integers and let
 ${\hat{\mathbb Q}}_p$ denote the $p$-adic numbers.  Then the following
sequence is exact:
\[
0 \rightarrow K_0(\mathbb Z) \rightarrow K_0 (\mathbb Q G) \oplus
\bigoplus_{p|n}^{}\;K_0 ({\hat{\mathbb Z}}_p G) \rightarrow
\bigoplus_{p|n}^{} K_0 ({\hat{\mathbb Q}}_p G) \rightarrow
K_{-1}(\mathbb Z G) \rightarrow 0.
\]

The group algebra $\mathbb Q[D_6 \times \mathbb Z/2]$ is isomorphic
to $\mathbb Q^8 \times (M_2(\mathbb Q))^4$  and the same statement
is true if $\mathbb Q$ is replaced by ${\hat{\mathbb Q}}_2$ and
${\hat{\mathbb Q}}_3$. Hence $K_0(\mathbb Q[D_6 \times \mathbb Z/2])
\cong K_0({\hat{\mathbb Q}}_2 [D_6 \times \mathbb Z/2])\cong
K_0({\hat{\mathbb Q}}_3[D_6 \times \mathbb Z/2])\cong \mathbb
Z^{12}$. The integral $p$-adic terms are $K_0({\hat{\mathbb Z}}_2
[D_6 \times \mathbb Z/2]) \cong K_0(\mathbb F_2[D_6 \times \mathbb
Z/2] \cong K_0(\mathbb F_2[D_6]) \cong \mathbb Z^2$, (see \cite[page
350]{Or04}), and $K_0({\hat{\mathbb Z}}_3[D_6 \times \mathbb Z/2])
\cong K_0(\mathbb F_3[D_6 \times \mathbb Z/2]) \cong K_0(\mathbb
F_3[(\mathbb Z/2)^3]) \cong \mathbb Z^8$. Carter also shows in
\cite{C80a} that $K_{-1}(\mathbb Z[D_6\times \mathbb Z/2])$ is
torsion free, so counting ranks in the exact sequence, we have that
$K_{-1}(\mathbb Z[D_6 \times \mathbb Z/2]) \cong \mathbb Z^3$.

\vspace{.2cm} To compute $\tilde{K}_0(\mathbb Z[D_6 \times \mathbb
Z/2])$, consider the following Cartesian square
\[
\xymatrix@C=20pt@R=30pt{
\mathbb Z[\mathbb Z/2][D_6] \ar[d] \ar[r] & \mathbb Z[D_6]
     \ar[d] \\
\mathbb Z[D_6] \ar[r] & \mathbb F_2[D_6]}
\]
which yields the Mayer-Vietories sequence (see \cite[Theorem
49.27]{CuR87})
\begin{equation}
\begin{split}
K_1(\mathbb Z[D_6 \times \mathbb Z/2]) \rightarrow K_1(\mathbb ZD_6) &\oplus K_1(\mathbb ZD_6) \xrightarrow{\varphi}
K_1(\mathbb F_2[D_6]) \rightarrow\\
&\rightarrow \tilde{K}_0(\mathbb
Z[\mathbb Z/2][D_6])  \rightarrow \tilde{K}_0(\mathbb
ZD_6) \oplus \tilde{K}_0(\mathbb ZD_6) \rightarrow 0
\end{split}
\end{equation}
We now proceed to compute the various terms appearing in this
sequence.

We start by looking at the terms involving $\mathbb ZD_6$.  In
\cite{Re76} Reiner shows that $\tilde{K}_0(\mathbb ZD_6)$ is
trivial. $K_1(\mathbb ZD_6)$ can be computed as follows: since
$Wh(G)$ equals $K_1(\mathbb ZG)/\{\pm G^{ab}\}$, the rank of
$K_1(\mathbb ZG)$ is equal to the rank of $Wh(G)$.  But the rank of
$Wh(G)$ is $y=r - q$, where $r$ denotes the number of irreducible
real representations of $G$, and $q$ denotes the number of
irreducible rational representations of $G$. In \cite{Bas65} Bass
shows that $r$ is equal to the number of conjugacy classes of sets
$\{x, x^{-1}\}$, $x \in G$ and $q$ is the number of conjugacy
classes of cyclic subgroups of $G$ (see also \cite{Mi66}). For
$G=D_6$, a direct calculations shows that $r=q$, and hence that
$Wh(G)$ is purely torsion.  Next note that the torsion part of
$K_1(\mathbb ZG)$ is $\{\pm 1\} \oplus G^{ab} \oplus SK_1(\mathbb
ZG)$ (see \cite{W74}), and hence we have that $Wh(D_6)=SK_1(\mathbb
ZD_6)$.  Since Magurn \cite{Ma78} has shown that $SK_1(\mathbb ZG)$
is trivial, we see that $Wh(D_6)$ is trivial. Since
$(D_6)^{ab}=(\mathbb Z/2)^2$, we obtain that $K_1(\mathbb
Z[D_6])=(\mathbb Z/2)^3$.

Next we consider the remaining terms in the Mayer-Vietoris sequence.
For $G=D_6 \times \mathbb Z/2$, Magurn in \cite[Corollary 11]{Ma80}
shows that $Wh(D_6 \times \mathbb Z/2)=0$ (note that the rank of
$Wh(G \times \mathbb Z/2)$ is twice the rank of $Wh(G)$ since $r$
and $q$ get doubled, see Section 5.2 and 5.3). Since $(D_6 \times
\mathbb Z/2)^{ab}=(\mathbb Z/2)^3$, this yields $K_1(\mathbb Z[D_6
\times \mathbb Z/2])=(\mathbb Z/2)^4$.  Finally, Magurn in
\cite[Example 9]{Ma06}) shows that $K_1(\mathbb F_2[D_6])=(\mathbb
Z/2)^4$. Substituting all the known terms into the exact sequence in
(1) yields the following exact sequence:

\begin{equation}
(\mathbb Z/2)^4 \xrightarrow{\sigma} (\mathbb Z/2)^3 \oplus (\mathbb Z/2)^3
\xrightarrow{\varphi} (\mathbb Z/2)^4 \rightarrow \tilde{K}_0(\mathbb
Z[\mathbb Z/2][D_6]) \rightarrow 0.
\end{equation}

Next, we study the image of $\varphi: K_1(\mathbb ZD_6) \oplus
K_1(\mathbb ZD_6) \rightarrow K_1(\mathbb F_2[D_6])$. We claim that
$\im(\varphi)=(\mathbb Z/2)^2$. This can be seen as follows: first
$\im(\varphi)=\im(\psi)$ where $\psi:K_1(\mathbb
ZD_6)\longrightarrow K_1(\mathbb F_2[D_6])$  is induced by the
canonical ring homomorphism $\mathbb Z \rightarrow \mathbb F_2$.
Note the $K_1(\mathbb Z)$ is a direct summand of $K_1(\mathbb ZD_6)$
and isomorphic to $\mathbb Z/2$\,; but this summand goes to zero in
$K_1(\mathbb F_2[D_6])$ since it factors through the following
commutative square
\[
\xymatrix@C=20pt@R=30pt{
\mathbb Z/2 = K_1(\mathbb Z) \ar[d] \ar[r] & K_1(\mathbb F_2)= 0
                 \ar[d] \\
K_1(\mathbb ZD_6) \ar[r] & K_1(\mathbb F_2[D_6])}
\]
Since $K_1(\mathbb ZD_6)=(\mathbb Z/2)^3$, this forces
$\dim_{\mathbb F_2}(\im(\varphi))\leq 2$. Now from the exact
sequences given in (1) and (2), we have that:
$$\dim_{\mathbb
F_2}(\im(\varphi))=2\dim_{\mathbb F_2}(K_1(\mathbb Z[D_6])) -
\dim_{\mathbb F_2}(\ker(\varphi))=6-\dim_{\mathbb
F_2}(\im(\sigma)).$$
Since $\dim_{\mathbb F_2}(\im(\sigma)) \leq 4$,
we see that $\dim_{\mathbb F_2}(\im(\varphi))\geq 2$, which forces
$\im(\varphi) \cong (\mathbb Z/2)^2$. The exact sequence now yields
$\tilde{K}_0(\mathbb Z[\mathbb Z/2][D_6])\cong (\mathbb Z/2)^2$.

\subsection{The computation of the $K$-groups $K_{-1}(\mathbb ZD_5)$, and $Wh(D_5)$}

To compute $K_{-1}(\mathbb ZD_5)$, we need Carter's formula for
$K_{-1}$, \cite[Theorem 3]{C80b}, the reader is referred to Section
5.1.
\[
0 \rightarrow K_0(\mathbb Z) \rightarrow K_0 (\mathbb Q D_5)\oplus
\bigoplus_{p|n}^{}\;K_0 ({\hat{\mathbb Z}}_p D_5) \rightarrow
\bigoplus_{p|n}^{}\;K_0 ({\hat{\mathbb Q}}_p D_5) \rightarrow
K_{-1}(\mathbb Z D_5) \rightarrow 0.
\]
The group algebra $\mathbb QD_5$ is isomorphic to $\mathbb Q
\,\times\, \mathbb Q$, and the same statement is true if $\mathbb Q$
is replaced by $\hat{\mathbb Q}_2$ (recall that $\sqrt{5} \notin
\hat{\mathbb Q}_2)$. For $p=5$, the group algebra $\hat{\mathbb Q}_5
D_5 \cong (\hat{\mathbb Q}_5)^2 \times M_2(\hat{\mathbb Q}_5)$.
Hence $K_0(\hat{\mathbb Q}_2[D_5]) \cong K_0(\mathbb Q[D_5])\cong
\mathbb Z^2$, and $K_0(\hat{\mathbb Q}_5[D_5])\cong \mathbb Z^3$.
Using techniques described in \cite[Section 5]{CuR81}, we have that
$K_0({\hat{\mathbb Z}}_5[D_5])\cong K_0(\mathbb F_5[D_5])\cong
K_0(\mathbb F_5[\mathbb Z/2])=K_0(\mathbb F_5 \times \mathbb
F_5)=\mathbb Z^2$. Also $K_0({\hat{\mathbb Z}}_2[D_5])\cong
K_0(\mathbb F_2[D_5])\cong K_0(\mathbb F_2 \times M_2(\mathbb
F_2))=\mathbb Z^2$. Carter also shows in \cite{C80a} that
$K_{-1}(\mathbb Z[D_5])$ is torsion free, so counting ranks as
before, we have that $K_{-1}(\mathbb ZD_5) \cong 0$.

\vspace{.2cm} Next, we compute $Wh(D_5)$. Recall that $Wh(G)=
\mathbb Z^y \oplus SK_1(\mathbb ZG)$. Magurn in \cite{Ma78} proves
that $SK_1$ vanishes for all finite dihedral groups. The rank of the
torsion free part is $y=r-q$ (see Section 5.1). Since in $D_5=\langle r,
s \,|\, r^5= s^2=1, srs=r^{-1} \rangle$, there are three cyclic
subgroups modulo conjugacy (the trivial subgroup $\{ e\}$, $C_2$ and
$C_5$) and four conjugacy classes of sets $\{x, x^{-1}\}$
(consisting of $\{e\}$, $\{r, r^4\}$, $\{r^2, r^3\}$ and $\{s,
s\}$), we see that $r=4$ and $q=3$. This yields $Wh(D_5) \cong
\mathbb Z^{r-q} \cong \mathbb Z$.

\subsection{The computation of the $K$-groups $K_{-1}(\mathbb ZD_{10})$, and $Wh(D_{10})$}

Carter shows in \cite{C80a} that  for $q \leq -2$, $K_q(\mathbb
ZD_{10})=0$. To calculate $K_{-1}(\mathbb ZD_{10})$, again using
Carter's formula for $K_{-1}$ \cite[Theorem 3]{C80b}, we have (see
Section 5.1 and 5.2):
\[
0 \rightarrow K_0(\mathbb Z) \rightarrow K_0 (\mathbb Q D_{10})
\oplus \bigoplus_{p|n}^{}\;K_0
({\hat{\mathbb Z}}_p D_{10}) \rightarrow
\bigoplus_{p|n}^{}\;K_0 ({\hat{\mathbb Q}}_p D_{10}) \rightarrow
K_{-1}(\mathbb Z D_{10}) \rightarrow 0
\]

The group algebra $\mathbb Q[D_5 \times \mathbb Z/2]$ is isomorphic
to $\mathbb Q^4$ and the same statement is true if $\mathbb Q$ is
replaced by ${\hat{\mathbb Q}}_2$. For $p=5$, the group algebra
${\hat{\mathbb Q}}_5[D_5 \times \mathbb Z/2] \cong ({\hat{\mathbb
Q}}_5)^4 \times (M_2({\hat{\mathbb Q}}_5))^2$. Hence $K_0(\mathbb
Q[D_5 \times \mathbb Z/2]) \cong K_0({\hat{\mathbb Q}}_2 [D_5 \times
\mathbb Z/2]) \cong \mathbb Z^4$, and $K_0({\hat{\mathbb Q}}_5[D_5
\times \mathbb Z/2])\cong \mathbb Z^6$. The integral $p$-adic terms
are $K_0({\hat{\mathbb Z}}_2 [D_5 \times \mathbb Z/2]) \cong
K_0(\mathbb F_2[D_5 \times \mathbb Z/2]) \cong K_0(\mathbb F_2[D_5])
\cong \mathbb Z^2$ (see Section 5.2), and $K_0({\hat{\mathbb
Z}}_5[D_5 \times \mathbb Z/2]) \cong K_0(\mathbb F_5[D_5 \times
\mathbb Z/2]) \cong K_0(\mathbb F_5[(\mathbb Z/2)^2]) \cong \mathbb
Z^4$. Carter also shows in \cite{C80a} that $K_{-1}(\mathbb
Z[D_{10}])$ is torsion free, so counting ranks in the exact
sequence, we have that $K_{-1}(\mathbb ZD_{10}) \cong \mathbb Z$.

\vspace{.2cm} Next, we compute $Wh(D_{10})$. Magurn in \cite{Ma78}
proves that $SK_1$ vanishes for all finite dihedral groups. For
$D_{10}=\langle r, s \,|\, r^{10}= s^2=1, srs=r^{-1} \rangle$, we
have four cyclic subgroups (modulo conjugacy) of $D_{10}$ : the
trivial subgroup $\{ e\}$, $\langle s\rangle$, $\langle r^2\rangle$
and $\langle r\rangle$. On the other hand there are six conjugacy
classes of sets $\{x, x^{-1}\}$: $\{e\}$, $\{r, r^9\}$, $\{r^2,
r^8\}$, $\{r^3, r^7\}$, $\{r^4, r^6\}$, $\{r^5, r^5\}$ and $\{s,
s\}$. This gives us $r=6$, $q=4$, and hence $Wh(D_{10}) \cong
\mathbb Z^{r-q} \cong \mathbb Z^2$.

\subsection{The Lower algebraic $K$-theory of $A_5 \times \mathbb Z/2$}

Carter shows in \cite{C80a} that  for $q \leq -2$, $K_q(\mathbb Z
F)=0$ when $F$ is a finite group. To compute $Wh_q(A_5 \times
\mathbb Z/2)$ for $q \leq 1$,  we first claim that
\[
Wh_q(A_5)=
\begin{cases}
\mathbb Z & q=1 \\
0 & q=0 \\
0 & q \leq -1.
\end{cases}
\]
This can be seen as follows: by \cite[Theorem 14.6]{O89}, we have
that $SK_1(\mathbb ZA_5)=0$. The group $A_5$ has precisely five
(mutually nonisomorphic) irreducible real representation, giving
$r=5$. In $A_5$ the conjugacy classes of cyclic subgroups are
represented by the trivial subgroup $\{e\}$,
$\langle(12)(34)\rangle$, $\langle(123)\rangle$,
$\langle(12345)\rangle$ giving us that $q=4$.  This forces
$Wh(A_5)\cong \mathbb Z^{r-q}=\mathbb Z$. By \cite{EM76}, we have
that $\tilde{K}_0(\mathbb ZA_5)=0$; Dress induction as used in
\cite[Theorem 11.2]{O89} shows that $K_{-1}(\mathbb ZA_5)=0$, and by
\cite{C80a} we have that $K_q(\mathbb ZA_5)=0$ for $q \leq -2$.

\vspace{.2cm}

Now let us compute $Wh(A_5 \times \mathbb Z/2)$. Magurn in
\cite[Example 5]{Ma} shows that $SK_1(\mathbb Z[A_5 \times \mathbb
Z/2])=0$.  Since $\rank(Wh(G \times \mathbb Z/2))=2\rank(Wh(G))$ and
$Wh(A_5)\cong \z$ we get $Wh(A_5 \times \mathbb Z/2) \cong \z^2$.

\vspace{.2cm}
Next, we compute $K_{-1}(\mathbb Z[A_5 \times \mathbb Z/2])$.
Consider the following Cartesian square
\[
\xymatrix@C=20pt@R=30pt{
\mathbb Z[\mathbb Z/2][A_5] \ar[d] \ar[r] & \mathbb ZA_5
     \ar[d] \\
\mathbb ZA_5 \ar[r] & \mathbb F_2[A_5]}
\]
which yields the Mayer-Vietories sequence (see \cite[Theorem 49.27]{CuR87})
\begin{equation}
\begin{split}
\ldots \rightarrow \tilde{K}_0(\mathbb ZA_5) &\oplus
\tilde{K}_0(\mathbb ZA_5) \xrightarrow{\varphi}
\tilde{K}_0(\mathbb F_2[A_5]) \rightarrow\\
&\rightarrow K_{-1}(\mathbb Z[\mathbb Z/2][A_5])  \rightarrow
K_{-1}(\mathbb ZA_5) \oplus K_{-1}(\mathbb ZA_5) \rightarrow \cdots
\end{split}
\end{equation}
from which we first obtain $K_{-1}(\mathbb Z[A_5 \times \mathbb
Z/2]) \cong \tilde{K}_0(\mathbb F_2[A_5])$. Since $\mathbb F_2[A_5]
\cong \mathbb F_2 \times M_4(\mathbb F_2)$, we have that
$K_0(\mathbb F_2[A_5])\cong \mathbb Z^2$, from which it follows that
$K_{-1}(\mathbb Z[A_5 \times \mathbb Z/2]) \cong \mathbb Z$.

Next, we claim that $\tilde{K}_0(\mathbb Z[A_5 \times \mathbb Z/2])$
is trivial.  To see this, let $H$ be a subgroup of $G$. For any
locally free $\mathbb ZG$-module $M$ its restriction to $H$ (denoted
by $M_H$) is a locally free $\mathbb ZH$-module. The mapping defined
by $\lbrack M \rbrack \to \lbrack M_H \rbrack$ gives a homomorphism
of $\tilde{K}_0(\mathbb ZG) \to \tilde{K}_0(\mathbb ZH)$.

A group $H$ is {\it hyper-elementary} if $H$ is a semidirect product
$N \rtimes P$ of a cyclic normal subgroup $N$ and a subgroup $P$ of
prime order, where $(|N|, |P|)=1$. Denote $\mathcal H(G)$ the full
set of non-conjugate hyper-elementary subgroups of $G$. We shall
need the following result due to Swan (see \cite{Sw60}): the map
\begin{equation}
\tilde{K}_0(\mathbb ZG) \longrightarrow \prod_{H \in \mathcal H(G)} \tilde{K}_0(\mathbb ZH)
\end{equation}
is a monomorphism for any finite group G. In particular, if
$\tilde{K}_0(\mathbb ZH)=0$ for all $H \in \mathcal H$, then we
immediately obtain $\tilde{K}_0(\mathbb ZG)=0$.

To begin the proof, we first list a full set $\mathcal H(A_5)$ of
non-conjugate hyper-elementary subgroups of $A_5$: $\mathbb Z/2
\times \mathbb Z/2$, $D_3$ and $D_5$. Note that the hyper-elementary
subgroups of $G \times \mathbb Z/2$ are of the form $H$ or $H \times
\mathbb Z/2$ for $H \in \mathcal H(G)$.  In particular, the
non-conjugate hyper-elementary subgroups of $A_5 \times \mathbb Z/2$
are: $\mathbb Z/2 \times \mathbb Z/2$, $D_3$ and $D_5$,  $(\mathbb
Z/2)^3$, $D_3 \times \mathbb Z/2\cong D_6$, and $D_5 \times \mathbb
Z/2 \cong D_{10}$.

By the results already mentioned in Sections 5.1, 5.2, 5.3, we have
$\tilde{K}_0(\mathbb ZH)=0$ for all $H \in \mathcal H(A_5 \times
\mathbb Z/2)$.  The result of Swan on the injectivity of the map in
(4) immediately implies that $\tilde{K}_0(\mathbb Z[A_5 \times
\mathbb Z/2])=0$.

\renewcommand{\arraystretch}{1.6}
\begin{equation*}
\begin{array}{|c|c|c|c|}
\hline Q \in \vc & Wh_q \neq 0, \;q\leq-1 & \tilde{K}_0 \neq 0 & Wh
\neq 0 \\ \hline \hline D_5 &  & & \mathbb Z \\ \hline D_6 & K_{-1}
\cong \mathbb Z & & \\ \hline D_4 \times \mathbb Z/2 & & \mathbb Z/4
& \\ \hline D_{10} &K_{-1}\cong \mathbb Z &  & \mathbb Z^2 \\ \hline
D_6 \times \mathbb Z/2 & K_{-1} \cong \mathbb Z^3 & (\mathbb Z/2)^2&
\\ \hline S_4 \times \mathbb Z/2 &  K_{-1} \cong \mathbb Z & \mathbb
Z/4 & \\ \hline A_5 &   &  & \mathbb Z \\ \hline A_5 \times \mathbb
Z/2 &  K_{-1} \cong \mathbb Z &  & \mathbb Z^2 \\
\hline
\end{array}
\end{equation*}

\vspace{.2cm} \centerline {\bf Table 5: Lower algebraic $K$-theory
of subgroups $Q \in \fin$}

\vspace{.3cm}


\section{Cokernels of relative assembly maps for maximal infinite
virtually cyclic subgroups}

In view of Corollary 3.4, we will need for our computations the
cokernels of the relative assembly maps for the various maximal
infinite virtually cyclic subgroups of Type I.  From the tables 2,
3, 4 computed in Section 4, we have the following list containing
{\it all} the maximal infinite virtually cyclic subgroups that
appear in the 32 groups we are interested in:

\vskip 10pt

\noindent {\bf Maximal infinite virtually cyclic subgroups:}
$\mathbb Z$, $D_{\infty}$, $\mathbb Z \times \mathbb Z/2$,
$D_{\infty} \times \mathbb Z/2$, $D_n \times D_{\infty}$, for
$n=2,3,4,5$, $D_4 \ast_{D_2} D_4$, and $D_2 \times \mathbb Z/2
\ast_{D_2} D_4$.

\vskip 10pt

We first note that, for the groups in our list, the cokernels are
known to be trivial in the following cases:

\begin{itemize}
\item $\mathbb Z$: by work of Bass \cite{Bas68}.

\vspace{.2cm}
\item $D_{\infty}$: by work of Waldhausen \cite{Wd78}.

\vspace{.2cm}
\item $ \mathbb Z \times \mathbb Z/2$, and $D_{\infty} \times \mathbb Z/2$: by work of Pearson \cite[Section 2]{Pe98}.

\vspace{.2cm}
\item $D_3 \times D_{\infty}$: by work of the authors \cite[Section 4]{LO}

\end{itemize}

Finally, the authors have also shown in \cite[Section 4]{LO} that
for the group $D_2 \times D_{\infty}$, the cokernels of the relative
assembly map for $n=0,1$ are countably infinite direct sums of
$\z/2$.  The remaining four groups in our list will be discussed in
the following subsections.

Observe that by a result of Farrell and Jones \cite{FJ95},
the cokernels we are interested in $H_n^{V}(E_{\fin}(V)\rightarrow *) $
are automatically trivial for $n\leq -1$.  In particular, we only need to
focus on the cases $n=0$, and $n=1$.  These cokernels are precisely
the elusive Bass, Farrell, and Waldhausen Nil-groups.  We are able to
identify these cokernels exactly, with the exception of the case
$D_4\times D_\infty$.  For this group, we content ourselves with
summarizing what we were able to obtain in Subsection 6.4.
We summarize the non-trivial cokernels in Table 6.

\renewcommand{\arraystretch}{1.6}
\begin{equation*}
\begin{array}{|c|c|c|}
\hline V \in \vc &  H_0^{V}(E_{\fin}(V)\rightarrow
*) \neq 0 & H_1^{V}(E_{\fin}(V)\rightarrow
*)\neq 0 \\
\hline
\hline D_2 \times D_{\infty} &\bigoplus_{\infty} \mathbb Z/2 &\bigoplus_{\infty}\mathbb Z/2 \\
\hline D_4 \ast_{D_2} D_4 &\bigoplus_{\infty} \mathbb Z/2 &\bigoplus_{\infty} \mathbb Z/2 \\
\hline  (D_2\times \z/2)*_{D_2}D_4 & \bigoplus_{\infty} \mathbb Z/2 & \bigoplus_{\infty} \mathbb Z/2 \\
\hline D_4 \times D_{\infty} &  Nil_0 & Nil_1 \\
\hline
\end{array}
\end{equation*}

\vspace{.2cm} \centerline {\bf Table 6: Cokernels of relative assembly map for
maximal $V \in \vc$}

\vspace{.3cm}


\subsection{The Lower algebraic $K$-theory of $D_5 \times D_{\infty}$}

First, note that $D_5\times D_{\infty} \cong D_{10} \ast_{D_5}
D_{10}$. As before $K_n(\mathbb Z Q)$ is zero for $n <-1$ (see
\cite{FJ95}).  Since $K_{-1}(\mathbb ZD_5)=0$ (see Section 5.2), and
$K_{-1}(\mathbb ZD_{10})=\mathbb Z$ (see Section 5.3), we see that
for $Q=D_{10} \ast_{D_5} D_{10}$, we have $K_{-1}(\mathbb
ZQ)=\mathbb Z \oplus \mathbb Z$.

\vspace{.2cm} For the remaining $K$-groups, we make use of
\cite[Lemma 3.8]{CP02}.  For $Q=D_{10} \ast_{D_5} D_{10}$, $\tilde
{K}_0(\mathbb ZQ) \cong NK_0(\mathbb ZD_5; C_1, C_2)$, where
$C_i=\mathbb Z[D_{10} - D_5]$ is the $\mathbb ZD_5$-bimodule
generated by $D_{10} - D_5$ for $i=1,2$, (see Section 5.2 and 5.3
for the $\tilde{K}_0(\mathbb ZD_n)$ for $n=5,10$), and $Wh(Q) \cong
\mathbb Z^3 \oplus NK_1(\mathbb ZD_5; C_1, C_2)$, with $C_1$ and
$C_2$ as before, (see Section 5.2 and 5.3 for the $Wh(D_n)$ for
$n=5,10$). The Nil-groups appearing in these computations are the
Waldhausen's Nil-groups.

Now by \cite{LO(b)}, we know that $NK_i(\mathbb ZD_5; C_1, C_2)=0$
for $i=0,1$, vanishes if and only if the corresponding Farrell
Nil-group vanishes for the canonical index two subgroup $D_5 \times
\mathbb Z \triangleleft D_5 \times D_\infty$.  Note that in this
case, the Farrell Nil-group is untwisted, and hence is just the Bass
Nil-group $NK_i(\mathbb ZD_5)$.  But Harmon \cite{Ha87} has shown
that for finite groups $G$ of square-free order (such as $D_5$), the
Bass Nil group $NK_i(\mathbb ZG)$ vanishes for $i=0,1$. We summarize
our computations in the following:
\[
Wh_q(D_5\times D_\infty)=
\begin{cases}
\mathbb Z^3 & q=1 \\
0 & q=0 \\
\mathbb Z^2 & q= -1\\
0 & q \leq -2.
\end{cases}
\]

\subsection{The Lower algebraic $K$-theory of $D_2 \times \mathbb Z/2 \ast_{D_2} D_4$}

As before $K_n(\mathbb Z Q)$ is zero for $n <-1$ (see \cite{FJ95}).
Since $K_{-1}(\mathbb ZD_2)=0$, $K_{-1}(\mathbb Z[D_2 \times \mathbb
Z/2])=0$, and $K_{-1}(\mathbb ZD_4)=0$, we see that for $Q=D_2
\times \mathbb Z/2 \ast_{D_2} D_4$, we have that $K_{-1}(\mathbb
ZQ)= 0$.

\vspace{.2cm} For the remaining $K$-groups using \cite[Lemma
3.8]{CP02}, we have that for $Q=D_2 \times \mathbb Z/2 \ast_{D_2}
D_4$, $\tilde {K}_0(\mathbb ZQ) \cong NK_0(\mathbb ZD_2; A_1, A_2)$,
where $A_1=\mathbb Z[D_2  \times \mathbb Z/2-D_2]$ is the $\mathbb
ZD_2$ bi-module generated by $(D_2 \times \mathbb Z/2) - D_2$, and
$A_2=\mathbb Z[D_4 - D_2]$ is the $\mathbb ZD_2$ bi-module generated
by $D_4 - D_2$.  Similarly, we have that $Wh(Q) \cong NK_1(\mathbb
ZD_2; A_1, A_2)$, where $A_1$, $A_2$ are the bi-modules defined
above.

\vspace{.2cm} Now recall that in \cite[Theorem 5.2]{LO}, the authors
established that (1) $\tilde K_0(\mathbb Z[D_2 \times D_\infty])\cong
\bigoplus_{\infty} \mathbb Z/2$ and (2) $K_1(\mathbb
Z[D_2 \times D_\infty])\cong \bigoplus_{\infty} \mathbb Z/2$.  The
computation reduced to showing that the Waldhausen Nil-groups
$NK_i(\mathbb ZD_2; A_2, A_2)$ is isomorphic to an infinite
countable sum of $\mathbb Z/2$ (where the bi-module $A_2$ is defined
in the previous paragraph).  This was achieved by establishing (1)
the existence of an injection, and (2) the existence of a
(different) surjection, from the Bass Nil-group $NK_i(\mathbb D_4)
\cong \bigoplus_{\infty} \mathbb Z/2$ into the corresponding
Waldhausen Nil-group $NK_i(\mathbb ZD_4; A_2, A_2)$.  But the reader
can verify that the argument given in \cite{LO} applies verbatim to
the Waldhausen Nil-groups $NK_i(\mathbb ZD_4; A_1, A_2)$ appearing
in our present computation.

We conclude that the lower algebraic $K$-theory of $D_2 \times
\mathbb Z/2 \ast_{D_2} D_4$ is given by:
\[
Wh_q(D_2 \times \mathbb Z/2 \ast_{D_2} D_4)=
\begin{cases}
\bigoplus_{\infty} \mathbb Z/2 & q=1 \\
\bigoplus_{\infty} \mathbb Z/2 & q=0 \\
0 & q \leq -1.
\end{cases}
\]

\subsection{The Lower algebraic $K$-theory of $D_4 \ast_{D_2} D_4$}

As before $K_n(\mathbb Z Q)$ is zero for $n <-1$ (see \cite{FJ95}).
Since $K_{-1}(\mathbb ZD_2)=0$ and $K_{-1}(\mathbb ZD_4)=0$, we see
that for $Q=D_4 \ast_{D_2} D_4$, we have that $K_{-1}(\mathbb ZQ)=
0$.

\vspace{.2cm} For the remaining $K$-groups, using \cite[Lemma
3.8]{CP02}, we have that for $Q=D_4 \ast_{D_2} D_4$, $\tilde
{K}_0(\mathbb ZQ) \cong NK_0(\mathbb ZD_2; F_1, F_2)$, where for
$i=1,2$, $F_i=\mathbb Z[D_4 - D_2]$ is the $\mathbb ZD_2$ bi-module
generated by $D_4 - D_2$.  Similarly, we have that $Wh(Q) \cong
NK_1(\mathbb ZD_2; F_1, F_2)$, with $F_1$ and $F_2$ as before.

\vspace{.2cm} Now using \cite[Theorem 5.2]{LO}, we concluded that
for $Q=D_4 \ast_{D_2} D_4$
\[
Wh_q(Q)=
\begin{cases}
\bigoplus_{\infty} \mathbb Z/2 & q=1 \\
\bigoplus_{\infty} \mathbb Z/2 & q=0 \\
0 & q \leq -1.
\end{cases}
\]


\subsection{The Lower algebraic $K$-theory of $D_4 \times D_{\infty}$}

The authors were unable to obtain an explicit computation for this
group.  In this case,  we have that $D_4 \times D_{\infty} \cong
(D_4\times \z /2) *_{D_4}(D_4\times \z /2)$, and we are interested
in the Waldhausen Nil-groups associated to this splitting.  A
special case of recent independent work of several authors
(including H. Reich, F. Quinn, J. Davis and A. Ranicki) is that this
Waldhausen Nil-group is isomorphic to the Bass Nil-group associated
to the canonical index two subgroup $D_4\times \z$ inside $D_4\times
D_\infty$ (a considerable strengthening of the result of the authors
in \cite{LO(b)}). In our tables, we denote these groups by
$Nil_0=NK_0(\z D_4)$ and $Nil_1= NK_1(\z D_4)$ (the lower Nil groups
vanish by work of Farrell-Jones \cite{FJ95}). These abelian groups
are known to have the following properties:
\begin{enumerate}
\item $Nil_1$ is either trivial or infinitely generated \cite{F77},
\item $Nil_0$ is infinitely generated (see below),
\item in both of these groups, the order of every element divides $8$ (\cite{CP02},
\cite{G07}).
\end{enumerate}

It is very likely that the group $Nil_1$ is also non-trivial, but we
were unable to establish this result. In order to see that $Nil_0$
is non-trivial, consider the following Cartesian square:

\[
\xymatrix@C=20pt@R=30pt{ \mathbb Z[\z /4]\cong \z[a]/{a^4-1=0} \ar[d]
\ar[r] &  \z[a]/{a^2-1=0} \cong \mathbb Z[\z /2]
     \ar[d] \\
\mathbb Z[i] \cong \z[a]/a^2+1=0 \ar[r] & \mathbb F_2[a]/a^2-1=0
\cong \mathbb F_2[\z /2]}
\]
which yields the Cartesian square for $\z[D_4]=\z[\z /4
\rtimes_{\alpha} \z /2]=\mathbb Z[\z /4]_{\alpha}[\z /2]$:
\[
\xymatrix@C=20pt@R=30pt{ \mathbb Z[\z /4]_{\alpha}[\z /2] \ar[d] \ar[r]
& \mathbb Z[\z /2][\z /2]
     \ar[d] \\
\mathbb Z[i]_{\alpha}[\z /2] \ar[r] & \mathbb F_2[\z /2][\z /2]}
\]
where in $\mathbb Z[i]_{\alpha}[\z /2]$, the automorphism $\alpha$
acts via $\alpha(i)=-i$.  Writing $D_2=\z /2\times \z /2$ and $A=
\mathbb Z[i]_{\alpha}[\z /2]$, and applying the $NK$-functor, this
Cartesian square yields the Mayer-Vietoris sequence:
\begin{equation*}
\begin{split}
NK_2(\mathbb F_2[D_2]) \rightarrow & Nil_1 \rightarrow
NK_1(\z [D_2]) \oplus NK_{1}(A)
\rightarrow NK_{1}(\mathbb F_2[D_2]) \rightarrow \\
&\rightarrow Nil_0 \rightarrow NK_0(\mathbb Z[D_2])
\oplus NK_0(A) \rightarrow NK_{0}(\mathbb F_2[D_2])
\end{split}
\end{equation*}
Several of the groups appearing in this Mayer-Vietoris sequence are
known: the group $NK_{0}(\mathbb F_2[D_2])$ vanishes by
\cite{Bas68},
while the authors have previously shown \cite{LO} that the groups
$NK_1(\mathbb Z[D_2])$ and $NK_0(\mathbb Z[D_2])$ are likewise
countable infinite sums of $\z /2$.

Focusing on the tail end of the Mayer-Vietoris sequence, and
substituting in the expressions we already know, we see that:
$$\ldots \rightarrow Nil_0 \rightarrow NK_0(A) \oplus \bigoplus_\infty \z/2
\rightarrow 0$$ and non-triviality of $Nil_0$ follows from the
surjectivity onto the countable infinite sum of $\z /2$.  In
contrast, focusing on the head of the Mayer-Vietoris sequence, we
see that:
$$NK_2(\mathbb F_2[D_2])  \rightarrow Nil_1\rightarrow NK_1(A) \oplus \bigoplus_\infty \z/2
\rightarrow NK_1(\mathbb F_2[D_2]) \rightarrow \ldots$$ Hence to
establish that $Nil_1$ is non-trivial from this sequence, one would
need to either:
\begin{itemize}
\item establish that the first map is non-zero, i.e. understand the
map $$NK_2(\mathbb F_2[D_2]) \rightarrow NK_1(\z D_4)$$
\item establish that the second map is non-zero by showing
that the third map has a non-trivial kernel, for instance by
understanding the map $NK_1(\z [D_2])
\rightarrow NK_{1}(\mathbb F_2[D_2])$
\end{itemize}
The authors have some partial results concerning some of the terms showing
up in the head of the Mayer-Vietoris sequence, but so far have been unsuccessful
in establishing non-triviality of $Nil_1$.


\section{The spectral sequences and final computations}

We now proceed to apply Corollary 3.4 to compute the lower algebraic
K-theory of $\z \Gamma$, for $\Gamma$ one of the 32 possible
$3$-simplex hyperbolic reflection groups.  Let us recall that
Corollary 3.4 tells us that for such groups $\Gamma$, we have for
$n\leq 1$ an isomorphism:
$$K_n(\z \Gamma)\cong H_n^{\Gamma}(E_{\vc}(\Gamma);\mathbb K\mathbb Z^{-\infty})
\oplus \bigoplus_{i=1}^k H_n^{V_i}(E_{\fin}(V_i)\rightarrow
*)$$
where $\{V_i\}_{i=1}^k$ are a complete set of representatives for
the conjugacy classes of maximal infinite virtually cyclic subgroups
of Type I.

We first note that for all 32 of our groups, we have:
\begin{itemize}
\item obtained in Section 4 a complete list of the Type I maximal
infinite virtually cyclic subgroups (listed out in Tables 2, 3, and
4).
\item computed in Section 6 the groups
$$H_n^{V}(E_{\fin}(V)\rightarrow*)$$
for all the Type I maximal infinite virtually cyclic subgroups that
occur, with the exception of the case $V= D_4\times D_\infty$.
\end{itemize}
In particular, this allows us to determine the expression
$$\bigoplus_{i=1}^k H_n^{V_i}(E_{\fin}(V_i)\rightarrow*)$$
occurring in the formula above for all 32 of our groups.

Hence we are left with computing
$H_n^{\Gamma}(E_{\vc}(\Gamma);\mathbb K\mathbb Z^{-\infty})$ for
each of our 32 groups.  In order to do this, we recall that Quinn
\cite{Qu82} established the existence of a spectral sequence which
converges to this homology group, with $E^2$-terms given by:
$$E^2_{p,q}=H_p(E_{\fin}(\g)/\g\; ;\{Wh_q(\g_{\sigma})\})
\Longrightarrow Wh_{p+q}(\g).$$ The complex that gives the homology
of $E_{\vc}(\g) /\g$ with local coefficients $\{Wh_q(
\g_{\sigma})\}$ has the form
\[
\cdots \rightarrow \bigoplus_{{\sigma}^{p+1}}^{}Wh_{q}(
\g_{{\sigma}^{p+1}}) \rightarrow \bigoplus_{{\sigma}^p}^{}Wh_q(
\g_{{\sigma}^p}) \rightarrow \bigoplus_{{\sigma}^{p-1}}^{}Wh_q(
\g_{{\sigma}^{p-1}}) \cdots \rightarrow
\bigoplus_{{\sigma}^0}^{}Wh_q( \g_{{\sigma}^0}),
\]
where ${\sigma}^p$ denotes the cells in dimension $p$, and the sum
is over all $p$-dimensional cells in $E_{\vc}(\g)/\g$. The $p^{th}$
homology group of this complex will give us the entries for the
$E^2_{p,q}$-term of the spectral sequence. Let us recall that
\[
Wh_q(F)=
\begin{cases}
Wh(F), & q=1 \\
\tilde {K}_0(\mathbb Z F), & q=0 \\
K_q(\mathbb Z F), & q \leq -1.
\end{cases}
\]
Observe that for the groups we are interested it is particularly
easy to obtain a model for $E_{\fin}(\Gamma)$ with the additional
nice property that the $\Gamma$-action is cocompact.  Indeed, in the
case of the 9 cocompact lattices, one can just take the
$\Gamma$-space to be $\mathbb H^3$, with fundamental domain a
3-dimensional simplex.  In the case of the 23 non-uniform lattices,
one can $\Gamma$-equivariantly remove a disjoint collection of
horoballs from $\mathbb H^3$ to form a $\Gamma$-space $X_\Gamma$ on
which $\Gamma$ acts cocompactly.  A fundamental domain for the
$\Gamma$-action on the space $X_\Gamma$ can be obtained by (1)
taking the $3$-simplex fundamental domain $\Delta^3_\Gamma$ for the
$\Gamma$-action on $\mathbb H^3$, and (2) removing a small
neighborhood of each ideal vertex in $\Delta^3_\Gamma$.

For the resulting fundamental domain $X_\Gamma/\Gamma$, it is
particularly easy to identify the stabilizers of each cell.  Indeed,
there will always be a single $3$-dimensional cell, with trivial
isotropy.  The $2$-dimensional cells will consist of
\begin{enumerate}
\item precisely four cells corresponding to the original faces of
$\Delta^3_\Gamma$, each of which will have stabilizer $\z /2$,
\item one additional cell for each ideal vertex (obtained from
``truncating'' the vertex), with trivial stabilizer.
\end{enumerate}
Note that since $Wh_q(1)$ and $Wh_q(\z /2)$ vanish for all $q\leq
1$, this in particular implies that {\it there will never be any
contribution to the $E^2$-terms from the $3$-dimensional and
$2$-dimensional cells.}  In other words, $E^2_{p,q}=0$ except
possibly for $p=0,1$.

Now let us focus on the $1$-dimensional and $0$-dimensional cells in
the fundamental domain $X_{\g}$.  The $1$-dimensional cells will
consist of
\begin{enumerate}
\item precisely six edges, corresponding to the original edges of
$\Delta^3_\Gamma$, each of which will have stabilizer a dihedral
group $D_n$ ($n=2,3,4,5,$ or $6$).
\item three new edges for each ideal vertex (obtained from
``truncating the vertex), with stabilizer $\z/2$.
\end{enumerate}
Note that amongst these groups, the only ones that have some
non-trivial $Wh_q$ are the groups $D_5$ (for $q=1$) and $D_6$ (for
$q=-1$).  Now the $0$-dimensional cells that occur consist of
\begin{enumerate}
\item one vertex for each of the non-ideal vertices in $\Delta^3_\Gamma$.
Each of these vertices will have stabilizer a spherical Coxeter
group, isomorphic to the special subgroup of the Coxeter group
$\Gamma$ which corresponds to the vertex (up to isomorphism, these
are the groups occuring in Table 1).
\item one new vertex for each edge leading into an ideal vertex
(obtained from ``truncating the cusp'').  The stabilizer of the
vertex will coincide with the stabilizer of the corresponding edge
(and hence be a dihedral group $D_n$ where $n=2,3,4,5,$ or $6$)
\end{enumerate}
For all these groups, the non-vanishing $Wh_q$ can be found in Table
5.

Finally, we observe that since the only $1$-cells with non-trivial
$Wh_q$ are the groups $D_5$ and $D_6$, most of the morphisms in the
chain complex for the $E^2$-terms will either be zero (or in a few
cases, will clearly be isomorphisms).  The three morphisms one needs
to take care with are:
\begin{itemize}
\item $K_{-1}(\z D_6)\rightarrow K_{-1}(\z [D_6\times \z/2])$,
\item $Wh(D_5)\rightarrow Wh(D_{10})$,
\item $Wh(D_5)\rightarrow Wh(A_5 \times \z/2)$.
\end{itemize}
We proceed to analyze each of these three morphisms in the next
three sections.

\subsection{The map $K_{-1}(\z D_6)\rightarrow K_{-1}(\z [D_6\times \z/2])$.}
We start by observing that $K_{-1}(\z D_6)\cong \z$ and $K_{-1}(\z
[D_6\times \z/2]) \cong \z ^3$ (see Table 5).  We claim that the map
induced by the natural inclusion $D_6\hookrightarrow D_6\times \z
/2$ is injective, and the quotient group is isomorphic to $\z ^2$.
In order to see this, we merely note that there is a retraction from
$D_6 \times \z/2$ to the subgroup $D_6$, and hence we must have that
$K_{-1}(\z D_6)\cong \z$ is a summand inside $K_{-1}(\z [D_6\times
\z/2])\cong \z ^3$, which immediately gives our claim.

\subsection{The map $Wh(D_5)\rightarrow Wh(D_{10})$.}
We start by observing that $Wh(D_5)\cong \z$ and $Wh(D_{10}) \cong
\z ^2$ (see Table 5).  We claim that the map induced by the natural
inclusion $D_5\hookrightarrow D_{10} \cong D_5\times \z /2$ is
injective, and the quotient group is isomorphic to $\z$.  But again,
we see that there is a retraction from $D_5\times \z/2$ to the
subgroup $D_5$, and hence $Wh(D_5)\cong \z$ is a summand inside
$Wh(D_{10}) \cong \z ^2$, which gives us our claim.  Note that this
map was used implicitly in Section 6.1 (in the argument mentioned in
the second paragraph).

\subsection{The map $Wh(D_5)\rightarrow Wh(A_5\times \z /2)$.}
We start by observing that $Wh(D_5)\cong \z$ and $Wh(A_5\times \z
/2) \cong \z ^2$ (see Table 5).  We claim that the map induced by
the natural inclusion $D_5\hookrightarrow A_5\times \z /2$ is
injective, and the quotient group is isomorphic to $\z$.  Note that
in this case we do {\it not} have a retraction from the group
$A_5\times \z /2$ to the subgroup $D_5$ (since $A_5$ is simple, the
only possible non-trivial quotients would be isomorphic $\z /2$,
$A_5$, or $A_5\times \z /2$).

Let us start by observing that, from the inclusion
$D_5\hookrightarrow A_5$, we obtain that the inclusion
$D_5\hookrightarrow A_5\times \z/2$ factors through:
$$D_5\hookrightarrow D_5\times \z/2 \cong D_{10}\hookrightarrow A_5\times \z/2$$
which implies the map on Whitehead groups likewise factors through:
$$Wh(D_5) \rightarrow Wh(D_{10}) \rightarrow Wh(A_5\times \z /2).$$
Observe that the first map in the above sequence was analyzed in the
previous Section 7.2.  Furthermore the last two groups in this
sequence are abstractly isomorphic to $\z ^2$.  So in order to
obtain our claim, all we need to do is establish that the inclusion
$D_{10}\hookrightarrow A_5\times \z/2$ induces an isomorphism on
Whitehead groups.

In order to do this, we recall that Dress induction provides us with
an isomorphism (see \cite[Chapter 11]{O89}):
$$Wh(A_5\times \z/2)\cong \underrightarrow{\lim} _{H\in \mathcal H(A_5\times \z/2)} Wh(H).$$
Here $\mathcal H(A_5\times \z/2)$ consists of all hyperelementary
subgroups of $A_5\times \z/2$, the limit is over all maps induced by
inclusion and conjugation, and the isomorphism is naturally induced
by the inclusions.  Now recall (Section 5.4) that the
hyperelementary subgroups of $A_5\times \z/2$ are, up to
isomorphism: $(\z/2)^2$, $(\z /2)^3$, $D_3$, $D_5$, $D_6$, and
$D_{10}$.  Amongst these groups (see Table 5), the only groups with
non-trivial $Wh$ are the groups $D_5$ and $D_{10}$, with
$Wh(D_5)\cong \z$ and $Wh(D_{10})\cong \z ^2$.  Furthermore, inside
the group $A_5\times \z/2$, it is easy to see that:
\begin{enumerate}
\item every subgroup isomorphic to $D_5$ lies inside a subgroup isomorphic to $D_{10}$,
\item all the subgroups isomorphic to $D_{10}$ are pairwise conjugate.
\end{enumerate}
This immediately implies that the direct limit to the right is
canonically isomorphic to $Wh(D_{10})$, which gives us our desired
claim.


\renewcommand{\arraystretch}{2}
\begin{equation*}
\begin{array}{|c|c|c|c|}
\hline
\g & K_{-1}\neq 0 & \tilde{K}_0 \neq 0  & Wh \neq 0 \\

\hline

\hline [3,5,3] & \mathbb Z^4 &
\bigoplus_{\infty} \mathbb Z/2 &  \z^3 \oplus \bigoplus_{\infty} \mathbb Z/2\\

\hline [5,3,5] & \mathbb Z^4 & \bigoplus_{\infty} \mathbb Z/2
& \z^6 \oplus \bigoplus_{\infty} \mathbb Z/2 \\

\hline [(3^{3},4)] & \mathbb Z^2  &  (\mathbb Z/4)^2 \oplus
\bigoplus_{\infty} \mathbb Z/2 \oplus Nil_0
&  \bigoplus_{\infty} \mathbb Z/2\oplus Nil_1\\

\hline [5,3^{1,1}] & \mathbb Z^2 & \bigoplus_{\infty} \mathbb Z/2
&\z^3 \oplus  \bigoplus_{\infty} \mathbb Z/2\\

\hline [4,3,5] &  \mathbb Z^3 & (\mathbb Z/4)^2 \oplus
\bigoplus_{\infty} \mathbb Z/2 \oplus Nil_0
& \z^3 \oplus \bigoplus_{\infty} \mathbb Z/2 \oplus Nil_1 \\

\hline [(3^{3},5)] & \mathbb Z^2 &
\bigoplus_{\infty} \mathbb Z/2 &\z^3 \oplus \bigoplus_{\infty} \mathbb Z/2  \\

\hline [(3,5)^{[2]}] & \mathbb Z^4 &
\bigoplus_{\infty} \mathbb Z/2 &\z^6 \oplus \bigoplus_{\infty} \mathbb Z/2\\

\hline [(3,4)^{[2]}] & \mathbb Z^4& (\mathbb
Z/4)^4\oplus\bigoplus_{\infty} \mathbb Z/2 \oplus Nil_0
& \bigoplus_{\infty} \mathbb Z/2\oplus Nil_1\\

\hline [(3,4,3,5)] &  \mathbb Z^4 & (\mathbb Z/4)^2
\oplus\bigoplus_{\infty} \mathbb Z/2 \oplus Nil_0
& \z^3 \oplus \bigoplus_{\infty} \mathbb Z/2 \oplus  Nil_1 \\

\hline
\end{array}
\end{equation*}

\vspace{.3cm} \centerline{\bf Table 6: The lower algebraic
$K$-theory of the cocompact hyperbolic 3-simplex groups}


\vskip 20pt

\subsection{The spectral sequences.}

By this point of the paper we have:
\begin{itemize}
\item described a simple model for $E_{\fin}(\Gamma)$ for our
groups, and identified the stabilizers of cells (in this section)
\item computed (in Section 5) the lower algebraic $K$-groups of
the stabilizers of the cells, and
\item identified (in this section) the non-trivial morphisms
appearing in the computation of the $E^2$-terms of the Quinn
spectral sequence.
\end{itemize}
Furthermore, as explained earlier, the only possible non-zero
terms in the spectral sequence are the $E_{p,q}$ with $p=0,1$.
This boils down to understanding the homology of the complex:
\[
0 \rightarrow \bigoplus_{{\sigma}^{1}}^{}Wh_{q}( \g_{{\sigma}^{1}})
\rightarrow \bigoplus_{{\sigma}^0}^{}Wh_q( \g_{{\sigma}^0})
\rightarrow 0,
\]
But we've seen in this section that the middle map is always
injective, hence the $E_{1,q}$ terms will also vanish.  This gives
us that {\it in all 32 cases the spectral sequence collapses at the
$E^2$-term}. In fact, in all 32 cases, the only possible non-zero
$E^2$-terms are $E^2_{0,-1}$, $E^2_{0,0}$, $E^2_{0,1}$.  In
particular the $K_i(\z \g)$ vanish for $i\leq -2$.

The results obtained for $K_{-1}$, $\tilde K_0$, and $Wh$ for all 32
hyperbolic 3-simplex groups are listed out in Table 6 and Table 7.
For ease of notation, we have only entered the non-zero terms in the
Tables; all the blank squares represent entries where the
corresponding group vanishes.


\renewcommand{\arraystretch}{1.8}
\begin{equation*}
\begin{array}{|c|c|c|c|}

\hline \g & K_{-1} \neq 0 & \tilde{K}_0 \neq 0 & Wh \neq 0
\\

\hline [3^{[\hskip 2pt]\times [\hskip 2pt]}] & & & \\

\hline [4^{[4]}] &  & & \\

\hline [(3,6)^{[2]}] & \mathbb Z^2 & & \\

\hline [6,3^{[3]}] & \mathbb Z^2 & &
\\

\hline [4^{1,1,1}] & & & \\

\hline [(3,5,3,6)] & \mathbb Z^3 & & \z ^3  \\

\hline [(3,4^3)] & \mathbb Z^2& (\mathbb Z/4)^2 & \\

\hline [(3,4,3,6)] & \mathbb Z^3& (\mathbb Z/4)^2 \oplus
Nil_0 & Nil_1 \\

\hline [(3^3,6)] & \mathbb Z & & \\

\hline [3^{[3,3]}] & &
\bigoplus_{\infty} \mathbb Z/2 & \bigoplus_{\infty} \mathbb Z/2\\

\hline [6,3^{1,1}] & \mathbb Z & \bigoplus_{\infty}
\mathbb Z/2 & \bigoplus_{\infty} \mathbb Z/2  \\

\hline [3,6,3] & \mathbb Z^3& & \\

\hline [6,3,6] & \mathbb Z^{6} & (\mathbb Z/2)^4 & \\

\hline [4,4,4] & & (\mathbb Z/4)^2 \oplus
\bigoplus_{\infty} \mathbb Z/2  & \bigoplus_{\infty} \mathbb Z/2 \\

\hline [5,3^{[3]}]  & \mathbb Z^2 &  \bigoplus_{\infty} \mathbb Z/2
& \z^3 \oplus \bigoplus_{\infty} \mathbb Z/2
\\

\hline [5,3,6]  & \mathbb Z^5 & \bigoplus_{\infty} \mathbb Z/2 & \z
^3 \oplus \bigoplus_{\infty}
\mathbb Z/2 \\

\hline [(3^2,4^2)]  & \mathbb Z^2 & (\mathbb Z/4)^2
\oplus\bigoplus_{\infty} \mathbb Z/2 & \bigoplus_{\infty} \mathbb
Z/2 \\

\hline [4,3^{[3]}]  & \mathbb Z^3 & (\mathbb Z/4)^2
\oplus\bigoplus_{\infty} \mathbb Z/2 \oplus Nil_0&
\bigoplus_{\infty} \mathbb Z/2 \oplus Nil_1\\

\hline [3,3^{[3]}]  & & \bigoplus_{\infty} \mathbb Z/2 &
\bigoplus_{\infty} \mathbb Z/2 \\

\hline [3,4^{1,1}]  & \mathbb Z^2& (\mathbb Z/4)^2
\oplus\bigoplus_{\infty} \mathbb Z/2& \bigoplus_{\infty} \mathbb
Z/2\\

\hline [4,3,6] & \mathbb Z^4& (\mathbb Z/4)^2 \oplus
\bigoplus_{\infty} \mathbb Z/2 \oplus
Nil_0 & \bigoplus_{\infty} \mathbb Z/2 \oplus Nil_1 \\

\hline [3,3,6] & \mathbb Z^4 &
\bigoplus_{\infty} \mathbb Z/2  & \bigoplus_{\infty} \mathbb Z/2\\

\hline [3,4,4] & \mathbb Z^2 & (\mathbb Z/4)^2 \oplus
\bigoplus_{\infty} \mathbb Z/2 & \bigoplus_{\infty} \mathbb Z/2 \\

\hline
\end{array}
\end{equation*}

\vspace{.3cm} \centerline{\bf Table 7: The lower algebraic
$K$-theory of the non-cocompact hyperbolic 3-simplex groups}


Note that several of the group appearing in Tables 6 and 7 involve
copies of the Bass Nil-groups $NK_0(\mathbb ZD_4)$ and $NK_1(\mathbb
ZD_4)$ (see Section 5.5).  In order to simplify the notation in the
tables, we will use $Nil_0$ and $Nil_1$ to denote these two
Nil-groups. Recall that we know that the groups $Nil_0, Nil_1$ are
torsion groups, where the order of every element divides $8$, and
furthermore the group $Nil_0$ is infinitely generated (see Section
5.5).


\section{Appendix: two specific examples.}

In this Appendix we work through the entire procedure for two
specific examples (one cocompact, and one non-cocompact), with a
view of helping the reader understand the layout of the paper.

\subsection{The group $[(3,5)^{[2]}]$}
The Coxeter diagram for this group $\Gamma$ can be found in Figure
1, from which the following presentation can be read off (see
Section 2):
$$\langle w,x,y,z \hskip 5pt | \hskip 5pt w^2=x^2=y^2= z^2=1,$$
$$(wx)^3=(xy)^5=(yz)^3=(zw)^5=(wy)^2=(xz)^2 = 1\rangle.$$
This group acts on $\mathbb H^3$ cocompactly, with fundamental
domain a $3$-simplex $\Delta ^3$.  After labeling the hyperplanes
extending the four faces by the four generators of $\Gamma$, the
angles between these hyperplanes satisfy the following relationships
(see Section 2):
\begin{itemize}
\item $\angle (P_w, P_y)=\angle (P_x, P_z)= \pi /2$,
\item $\angle (P_w,P_x) = \angle (P_y, P_z) = \pi /3$,
\item $\angle (P_w, P_z)= \angle (P_x, P_y) = \pi /5$.
\end{itemize}

In particular, the action of $\Gamma$ on $\mathbb H^3$ gives a
cocompact model for $E_{\fin}(\Gamma)$, and the splitting formula
(see Corollary 3.4) tells us that we have, for all $n\leq 1$,
isomorphisms:
$$K_n(\z \Gamma)\cong H_n^{\Gamma}(E_{\fin}(\Gamma);\mathbb K\mathbb Z^{-\infty})
\oplus \bigoplus_{i=1}^k H_n^{V_i}(E_{\fin}(V_i)\rightarrow *).$$

Let us now identify the (finitely many) groups $\{V_i\}$ that appear
in the above formula.  As explained in Section 4, these groups will
arise as stabilizers of Type I geodesics, which are precisely (up to
the $\Gamma$-action) one of the six geodesics $P_w\cap P_x$, $P_w
\cap P_y$, $P_w \cap P_z$, $P_x\cap P_y$, $P_x\cap P_z$, and
$P_y\cap P_z$.  To identify the stabilizers of these geodesics, we
first need to identify the vertex stabilizers for the simplex
$\Delta ^3$.  Recall that these will be the special subgroups
generated by triples of generators.  But from the Coxeter diagram
for $\Gamma$, one immediately sees that any triple of vertices spans
out a subdiagram corresponding to the Coxeter group $[3,5]$.  This
implies that every vertex has stabilizer isomorphic to the (finite)
Coxeter group $[3,5]$, which is well known to be isomorphic to the
group $A_5\times \z/2$.  Now for each of the six type I geodesics we
have, one can consider the projection to the fundamental domain
$\Delta ^3$.  From Table 1, looking up the vertex stabilizers
$A_5\times \z/2$, we see that every one of the six geodesics
projects to precisely the associated edge in $\Delta ^3$. Now to
find the stabilizers of the geodesics, one applies Bass-Serre
theory.  The stabilizer acts on each of the geodesics with quotient
a segment, so one can write each of the stabilizers as a generalized
free product.  Furthermore, Table 1 allows us to identify the vertex
groups in the Bass-Serre graph of groups.

Let us see how this works, for instance in the case of the geodesic
$P_x \cap P_y$. The two associated hyperplanes $P_x$ and $P_y$
intersect at an angle of $\pi /5$, hence the edge group in the
Bass-Serre graph of groups will be $D_5$.  For the vertex groups, we
see that the corresponding segment in $\Delta ^3$ joins a pair of
vertices with stabilizer $A_5\times \z/2$, and correspond to the
angle of $\pi /5$ at both the vertices.  The last row in Table 1
tells us that both the vertex groups in the Bass-Serre graph of
groups will be $D_{10}$.  This tells us that the stabilizer of the
geodesic $P_x\cap P_y$ is precisely the group $D_{10}*_{D_5}D_{10}
\cong D_5\times D_{\infty}$.  Carrying this procedure out for each
of the six geodesics, one finds that the stabilizers one obtains
are:
\begin{itemize}
\item two copies of $D_{10}*_{D_5}D_{10}$, corresponding to the two
geodesics $P_x\cap P_y$ and $P_w\cap P_z$,
\item two copies of $D_6*_{D_3}D_6$, corresponding to the two
geodesics $P_w \cap P_x$ and $P_y\cap P_z$,
\item two copies of $D_2\times D_\infty$, corresponding to the two
geodesics $P_w\cap P_y$ and $P_x\cap P_z$.
\end{itemize}
Note that these are precisely the groups that are listed out in
Table 4.  Finally, amongst these six subgroups, one needs to know
which ones have a non-trivial cokernel for the relative assembly
map.  But from the work in Section 6, all the non-trivial cokernels
are listed out in Table 6.  Looking up Table 6, one sees that out of
these six groups, the only ones with non-trivial cokernels are the
two copies of $D_2\times D_\infty$, each of whom contributes
$\bigoplus _\infty \z/2$ to the $K_0(\z\Gamma)$ and $Wh(\Gamma)$.

So we are finally left with computing the homology coming from the
finite subgroups, i.e. the term
$H_n^{\Gamma}(E_{\fin}(\Gamma);\mathbb K\mathbb Z^{-\infty})$.  As
we mentioned earlier, a cocompact fundamental domain for $\mathbb
H^3/\Gamma$ is given by $\Delta ^3$.  The stabilizers of cells in
the fundamental domain can be read off from the Coxeter diagram, as
they will precisely be the special subgroups (see the discussion in
Section 7). We see that:
\begin{itemize}
\item there is one 3-dimensional cell (the interior of $\Delta ^3$),
with trivial stabilizer,
\item there are four 2-dimensional cells (the faces of $\Delta ^3$),
with stabilizer $\z /2$,
\item there are six 1-dimensional cells (the edges of $\Delta ^3$),
two of which have stabilizer $D_2$, two of which have stabilizer
$D_3$, and two of which have stabilizer $D_5$,
\item there are four 0-dimensional cells (the vertices of $\Delta
^3$), each of which has stabilizer $A_5\times \z/2$.
\end{itemize}
Now to obtain the $E^2$-terms in the Quinn spectral sequence, we
need the homology of the complex:
\[
\cdots \rightarrow \bigoplus_{{\sigma}^{p+1}}^{}Wh_{q}(
\g_{{\sigma}^{p+1}}) \rightarrow \bigoplus_{{\sigma}^p}^{}Wh_q(
\g_{{\sigma}^p}) \rightarrow \bigoplus_{{\sigma}^{p-1}}^{}Wh_q(
\g_{{\sigma}^{p-1}}) \cdots \rightarrow
\bigoplus_{{\sigma}^0}^{}Wh_q( \g_{{\sigma}^0}),
\]
where $\sigma ^p$ are the $p$-dimensional cells (which we identified
above).  But from the work in Section 5, we know explicitly all the
groups appearing in the above complex.  Indeed, looking up the
non-zero $K$-groups in Table 5, we see that for $q<-1$, the entire
complex is identically zero.  For the remaining values of $q$, we
have:

\vskip 5pt

\noindent {\bf $q=-1$:}  The complex degenerates to
$$0\rightarrow 4K_{-1}(\z[A_5\times \z/2]) \rightarrow 0,$$
where the four copies of $K_{-1}(\z[A_5\times \z/2])$ come from the
four vertices of $\Delta ^3$.  Since we know (see Table 5) that
$K_{-1}(\z[A_5\times \z/2])\cong \z$, we immediately get that
$E^2_{p,-1}$ all vanish, with the exception of $E^2_{0,-1}\cong
\z^4$.

\vskip 5pt

\noindent {\bf $q=0$:}  The complex is identically zero, and hence
we see that $E^2_{p,0}$ all vanish.

\vskip 5pt

\noindent {\bf $q=1$:}  The complex degenerates to:
$$0\rightarrow 2Wh(D_5) \rightarrow 4Wh(A_5\times \z/2) \rightarrow 0.$$
Note that the first copy of $Wh(D_5)$ comes from the edge $P_x\cap
P_y \cap \Delta ^3$, while the second copy of $Wh(D_5)$ comes from
the $P_w\cap P_z \cap \Delta ^3$.  The four copies of $Wh(A_5\times
\z/2)$ come from the four vertices of $\Delta ^3$.

Since the two edges $P_x\cap P_y \cap \Delta ^3$ and $P_w\cap P_z
\cap \Delta ^3$ are disjoint, the complex splits as a sum of two
subcomplexes, one for each of the two edges.  Focusing on the first
edge, we see that we have:
$$0\rightarrow Wh(D_5) \rightarrow 2Wh(A_5\times \z/2) \rightarrow 0$$
We know that $Wh(D_5)\cong \z$ and $Wh(A_5\times \z/2)\cong \z ^2$
(see Table 5), and that the map $Wh(D_5)\hookrightarrow Wh(A_5\times
\z/2)$ induced by inclusion is split injective (see Section 7.3).
This immediately tells us that in the chain complex above, we have
that $2Wh(A_5\times \z/2) / Wh(D_5) \cong \z ^3$. An identical
analysis for the other edge gives us that the homology of the
original complex yields $E^2_{1,1} \cong 0$ and $E^2_{0,1}\cong \z
^6$.

\vskip 5pt

Combining everything we've said so far, we see that for the Quinn
spectral sequence, the only non-zero $E^2$-terms are
$E^2_{0,-1}\cong \z^4$ and $E^2_{0,1}\cong \z ^6$.  This implies
that the spectral sequence immediately collapses, giving us that
$$H_n^{\Gamma}(E_{\fin}(\Gamma);\mathbb K\mathbb Z^{-\infty})\cong
0$$ for $n <-1, n=0$, and
$$H_{-1}^{\Gamma}(E_{\fin}(\Gamma);\mathbb K\mathbb Z^{-\infty}) \cong \z
^4,$$
$$H_1^{\Gamma}(E_{\fin}(\Gamma);\mathbb K\mathbb Z^{-\infty}) \cong \z
^6.$$

\vskip 5pt

We now have both the terms appearing in the splitting formula, and
we conclude that the lower algebraic $K$-theory of the group
$\Gamma$ is given by:
\[
Wh_n(\Gamma)=
\begin{cases}
Wh(\Gamma)\cong \z^6 \oplus \bigoplus _\infty \z/2, & n=1 \\
\tilde {K}_0(\z \Gamma) \cong \bigoplus _\infty \z/2, & n=0 \\
K_{-1}(\mathbb Z \Gamma)\cong \z ^4, & n =-1\\
K_n(\mathbb Z \Gamma)\cong 0, & n \leq -1.
\end{cases}
\]
Looking up Table 6, one finds that these are precisely the values
reported.

\subsection{The group $[3,4^{1,1}]$.}

The Coxeter diagram for this group $\Gamma$ can be found in Figure
2, from which the following presentation can be read off (see
Section 2):
$$\langle w,x,y,z \hskip 5pt | \hskip 5pt w^2=x^2=y^2= z^2=1,$$
$$(wx)^3=(xy)^4=(yz)^2=(zw)^2=(wy)^2=(xz)^4 = 1\rangle.$$
This group acts on $\mathbb H^3$ with cofinite volume, with
fundamental domain a (non-compact) $3$-simplex $\Delta ^3$ with one
ideal vertex.  After labeling the hyperplanes extending the four
faces by the four generators of $\Gamma$, the angles between these
hyperplanes satisfy the following relationships (see Section 2):
\begin{itemize}
\item $\angle (P_w, P_y)=\angle (P_w, P_z)= \angle (P_w,P_z) = \pi /2$,
\item $\angle (P_w,P_x) = \pi /3$,
\item $\angle (P_x, P_z)= \angle (P_x, P_y) = \pi /4$.
\end{itemize}
The ideal vertex arises as the intersection (at infinity) of the
three hyperplanes $P_x\cap P_y\cap P_z$, and has stabilizer the
2-dimensional crystallographic group $[4,4]$.

While the action of $\Gamma$ on $\mathbb H^3$ does not give a
cocompact model for $E_{\fin}(\Gamma)$, one can obtain such a model
by $\Gamma$-equivariantly truncating disjoint horospheres centered
at the $\Gamma$-orbits of the ideal vertex (see Section 7). The
splitting formula (see Corollary 3.4) tells us that we have, for all
$n\leq 1$, isomorphisms:
$$K_n(\z \Gamma)\cong H_n^{\Gamma}(E_{\fin}(\Gamma);\mathbb K\mathbb Z^{-\infty})
\oplus \bigoplus_{i=1}^k H_n^{V_i}(E_{\fin}(V_i)\rightarrow *).$$

Let us now identify the (finitely many) groups $\{V_i\}$ that appear
in the above formula.  As explained in Section 4, these groups will
arise as stabilizers of Type I geodesics, which are precisely (up to
the $\Gamma$-action) one of the six geodesics $P_w\cap P_x$, $P_w
\cap P_y$, $P_w \cap P_z$, $P_x\cap P_y$, $P_x\cap P_z$, and
$P_y\cap P_z$.  Note that since the geodesic segments $P_x\cap P_y$,
$P_y\cap P_z$ and $P_x \cap P_z$ project to non-compact segments in
the fundamental domain (they give rise to edges joined to the ideal
vertex), these geodesics will {\it never} have an infinite
stabilizer, and we can hence safely ignore them.

To identify the stabilizers of the remaining three geodesics, we
follow the procedure from Section 4.  We first need to identify the
(non-ideal) vertex stabilizers for the simplex $\Delta ^3$.  Recall
that these will be the special subgroups generated by triples of
generators.  But from the Coxeter diagram for $\Gamma$, one
immediately sees that the triple of vertices span out the
subdiagrams:
\begin{itemize}
\item the Coxeter group $[3,4]\cong S_4 \times \z /2$ will be the stabilizer
of the vertices $P_w\cap P_x\cap P_z$ and of the vertex $P_w\cap
P_x\cap P_y$,
\item the group $(\z/2)^3$ will be the stabilizer of the vertex $P_w\cap P_y\cap P_z$.
\end{itemize}
Now for each of the three (potentially cocompact) type I geodesics
that we have ($P_w\cap P_x$, $P_w\cap P_y$, and $P_w\cap P_z$) one
can consider the projection to the fundamental domain $\Delta ^3$.
From Table 1, looking up the vertex stabilizers $S_4\times \z/2$, we
see that every one of the three geodesics projects to precisely the
associated edge in $\Delta ^3$.

To find the stabilizers of these geodesics, we now use Bass-Serre
theory as explained in Section 4.  To find the vertex groups, one
uses Table 1, while the edge group will be precisely the dihedral
group given by the special subgroup associated to the geodesic.
This immediately gives us the stabilizers:
\begin{itemize}
\item one copy of $D_{6}*_{D_3}D_{6}$, corresponding to the
geodesic $P_w\cap P_x$,
\item two copies of $(\z/2\times D_2)*_{D_2}(\z/2\times D_2)\cong D_2\times D_\infty$,
corresponding to the two geodesics $P_w \cap P_y$ and $P_w\cap P_z$,
\end{itemize}
which are precisely the groups reported in Table 3.  Finally,
amongst these three subgroups, one needs to decide which ones have a
non-trivial cokernel for the relative assembly map.  These cokernels
are listed out in Table 6, and one sees that the only non-trivial
contribution will come from the two copies of $D_2\times D_\infty$,
each of which will contribute $\bigoplus _\infty \z/2$ to the
$\tilde K_0(\z \Gamma)$ and $Wh(\Gamma)$.

So we are finally left with computing the homology coming from the
finite subgroups, i.e. the term
$H_n^{\Gamma}(E_{\fin}(\Gamma);\mathbb K\mathbb Z^{-\infty})$.  As
we mentioned earlier, a cocompact fundamental domain for $\mathbb
H^3/\Gamma$ is given by ``truncating'' the ideal vertex from $\Delta
^3$. The stabilizers of cells in the fundamental domain can be read
off from the Coxeter diagram, as they will precisely be the special
subgroups (see the discussion in Section 7). We see that:
\begin{itemize}
\item there is one 3-dimensional cell (the interior of $\Delta ^3$),
with trivial stabilizer,
\item there are five 2-dimensional cells,
with stabilizer $\z /2$ (for the faces of the original $\Delta ^3$),
or trivial (for the face coming from truncating the ideal vertex in
$\Delta^3$),
\item there are nine 1-dimensional cells (the six edges of $\Delta ^3$,
and three edges obtained from the truncation).  Three of these will
have stabilizer $\z /2$ (those coming from truncating the ideal
vertex in $\Delta ^3$), three will have stabilizer $D_2$ (from the
edges corresponding to $P_y\cap P_z$, $P_w\cap P_y$, and $P_w\cap
P_z$), two will have stabilizer $D_4$ (from the edges corresponding
to $P_x\cap P_y$ and $P_x\cap P_z$), and one with stabilizer $D_3$
(from the edge corresponding to $P_w\cap P_x$),
\item there are six 0-dimensional cells (three non-ideal vertices of $\Delta
^3$, and three from the truncation of the ideal vertex).  Two have
stabilizers $D_4$ (from the truncation of the two edges with the
same stabilizer), one has stabilizer $D_2$ (from the truncation of
the third edge), two have stabilizer $S_4\times \z /2$ (from two of
the non-ideal vertices), and one has stabilizer $(\z /2)^3$ (from
the third non-ideal vertex).
\end{itemize}
Now to obtain the $E^2$-terms in the Quinn spectral sequence, we
need the homology of the complex:
\[
\cdots \rightarrow \bigoplus_{{\sigma}^{p+1}}^{}Wh_{q}(
\g_{{\sigma}^{p+1}}) \rightarrow \bigoplus_{{\sigma}^p}^{}Wh_q(
\g_{{\sigma}^p}) \rightarrow \bigoplus_{{\sigma}^{p-1}}^{}Wh_q(
\g_{{\sigma}^{p-1}}) \cdots \rightarrow
\bigoplus_{{\sigma}^0}^{}Wh_q( \g_{{\sigma}^0}),
\]
where $\sigma ^p$ are the $p$-dimensional cells (which we identified
above).  But from the work in Section 5, we know explicitly all the
groups appearing in the above complex.  Indeed, looking up the
non-zero $K$-groups in Table 5, we see that {\it the only one of the
cell stabilizers that has non-trivial $K$-theory is the group
$S_4\times \z /2$}.  There are two copies of this group, arising as
stabilizers of $0$-cells, and we have that $K_{-1}(\z [S_4\times
\z/2])\cong \z$ and $\tilde K_0(\z [S_4\times \z/2])\cong \z /4$.
This immediately tells us that non-zero terms in the Quinn spectral
sequence will be $E^2_{0,-1}\cong \z^2$ and $E^2_{0,0}\cong (\z
/4)^2$.  This implies that the spectral sequence immediately
collapses, giving us that
$$H_n^{\Gamma}(E_{\fin}(\Gamma);\mathbb K\mathbb Z^{-\infty})\cong
0$$ for $n <-1, n=1$, and
$$H_{-1}^{\Gamma}(E_{\fin}(\Gamma);\mathbb K\mathbb Z^{-\infty}) \cong \z
^2,$$
$$H_0^{\Gamma}(E_{\fin}(\Gamma);\mathbb K\mathbb Z^{-\infty}) \cong (\z
/4)^2.$$

\vskip 5pt

We now have both the terms appearing in the splitting formula, and
we conclude that the lower algebraic $K$-theory of the group
$\Gamma$ is given by:
\[
Wh_n(\Gamma)=
\begin{cases}
Wh(\Gamma)\cong \bigoplus _\infty \z/2, & n=1 \\
\tilde {K}_0(\z \Gamma) \cong (\z/4) ^2 \oplus \bigoplus _\infty \z/2, & n=0 \\
K_{-1}(\mathbb Z \Gamma)\cong \z ^2, & n =-1\\
K_n(\mathbb Z \Gamma)\cong 0, & n \leq -1.
\end{cases}
\]
Looking up Table 7, one finds that these are precisely the values
reported.


\newpage

\end{document}